\newlength\figureheight 
\newlength\figurewidth 
\pgfplotsset{compat=newest}
\pgfplotsset{plot coordinates/math parser=false}
\newtheorem{specialcasecounter}{Theorem}
\newtheoremstyle{specialcasestyle}{1mm}{1mm}{\upshape}{}{\bfseries\upshape}{.}{0mm}{}
\theoremstyle{specialcasestyle}
\newtheorem{lem}{Lemma}
\newtheorem{prop}{Proposition}
\begin{document}

\title{On the Efficient Simulation of the Left-Tail of the Sum of Correlated Log-normal Variates}
\author{Mohamed-Slim Alouini, Nadhir Ben Rached, Abla Kammoun, \\and Raul Tempone\\\thanks{The authors are in Computer, Electrical and Mathematical Science and Engineering (CEMSE) Division, King Abdullah University of Science and Technology (KAUST), Thuwal, Makkah Province, Saudi Arabia , and also members of the KAUST Strategic Research Initiative on Uncertainty Quantification in Science and Engineering (SRI-UQ) (e-mail: {slim.alouini,nadhir.benrached, abla.kammoun, raul.tempone}@kaust.edu.sa).
}
}
\date{}
\maketitle
\thispagestyle{empty}

\begin{abstract}
The sum of Log-normal variates is encountered in many challenging applications such as in performance analysis of wireless communication systems and in financial engineering. Several approximation methods have been developed in the literature, the accuracy of which is not ensured in the tail regions. These regions are of primordial interest wherein small probability values have to be evaluated with high precision. Variance reduction techniques are known to yield accurate, yet efficient, estimates of small probability values. Most of the existing approaches, however,  have considered the problem of estimating the right-tail of the sum of Log-normal random variables (RVS). In the present work, we consider instead the estimation of the left-tail of the sum of correlated Log-normal variates with Gaussian copula under a mild assumption on the covariance matrix. We propose an estimator combining an existing mean-shifting importance sampling approach with a control variate technique. The main result is that the proposed estimator has an asymptotically vanishing relative error which represents a major finding in the context of the left-tail simulation of the sum of Log-normal RVs. Finally, we assess by various simulation results the performances of the proposed estimator compared to existing estimators.
\end{abstract}
{\bf Keywords: }Sum of correlated Log-normal, small probability values, variance reduction techniques, left-tail of the sum of correlated Log-normal variates, importance sampling, control variate, asymptotically vanishing relative error.
%\IEEEpeerreviewmaketitle
\section{Introduction}
The Log-normal distribution is encountered in many applications such as in financial engineering \cite{CIS-230627}. In performance analysis of wireless communications systems, it has shown a good fit to  realistic propagation channels \cite{Stuber:2001:PMC:368633,Ghavami,journals/twc/NavidpourUK07}. Therefore, investigating the distribution of sums of Log-normal random variables (RVs) is of primordial practical interest. In fact, the pricing of Asian or basket options is closely related to the distribution of the sum of Log-normal variates \cite{CIS-230627}. A further application where the problem of finding the distribution of sums of Log-normal variates is encountered is in the evaluation of the value at risk, defined as (1-$\alpha$) quantile of the loss distribution, for a sufficiently small value of $\alpha$ \cite{asmussen2016exponential}. In wireless communication systems, the distribution of sums of Log-normal RVs is of major practical interest in the problem of evaluating the outage probability at the output  of receivers with diversity techniques \cite{7328688}.

The distribution of the sum of Log-normal variates is not known in a closed form. This has lead researchers to propose various approximation techniques such as in \cite{citeulike:6297231,citeulike:7151841,4784348,4814351,1275712}. However, the accuracy of these approximations is not ensured for all scenarios especially in the tail regions, i.e. the region we are interested in, as illustrated in \cite{1275712}.

Variance reduction Monte Carlo (MC) methods constitute good alternatives to efficiently estimate tail probabilities of the distribution of sums of Log-normal RVs. The literature of estimating the distribution of sums of Log-normal variates is very rich. However,  most of them have concentrated in the right-tail region instead of the left-tail one, which is the region of interest in this work. For instance, a non exhaustive list of efficient variance reduction techniques have been developed in \cite{reference_1,BenRached2016,Juneja:2002:SHT:566392.566394} to estimate the right-tail of the distribution of sums of independent Log-normal RVs. In the correlated setting as well, efficient simulation approaches of the right-tail of sums of correlated Log-normal variates have been proposed in \cite{DBLP:journals/anor/AsmussenBJR11,DBLP:conf/wsc/BlanchetJR08,CIS-358019}. 

The estimation of the distribution of the left-tail of sums of Log-normal RVs has received less interest than that of the right-tail region. Estimating the left-tail of the distribution of sums of Log-normal variates is motivated by the problem of evaluating outage probabilities in the performance analysis of wireless communication systems operating over a Log-normal fading environment. It is only recently that researchers have accorded some interest to  the estimation of the left-tail region \cite{7328688,asmussen2016exponential,gulisashvili2016}. In fact, the authors in \cite{asmussen2016exponential} have used the well-known importance sampling (IS) technique; namely the exponential twisting approach \cite{179349}, and have proved that the proposed estimator achieves the asymptotic optimality property under the independent and identically distributed (i.i.d) assumption. However, given that the application of the exponential twisting technique requires the knowledge of the moment generating function (MGF) which is out of reach for the Log-normal distribution, the authors in \cite{asmussen2016exponential} have considered instead an estimator of the MGF \cite{Laplace}. In \cite{7328688}, two unified IS approaches have been proposed using the well-known hazard rate twisting technique \cite{BenRached2016,Juneja:2002:SHT:566392.566394}. In particular, for the Log-normal setting, the first estimator was shown to achieve the asymptotic optimality criterion under the assumption of independent and not necessarily identically distributed sums of Log-normal RVs. The asymptotic optimality property holds again using the second IS scheme under the i.i.d assumption. Comparisons between the estimator of \cite{asmussen2016exponential} and the two latter estimators have been performed in \cite{7328688}. The sole work the authors are aware of it that deals with the efficient simulation of the left-tail of the sum of correlated Log-normal variates is in \cite{gulisashvili2016}. In fact, an IS estimator has been proposed by shifting the mean of the corresponding multivariate normal distribution. This estimator was shown to achieve the asymptotic optimality property under a mild assumption. 
%In addition to the proposed IS approach, tail behavior of linear combinations of sums of Log-normal variates has been studied as well.

We aim in the present paper to further improve the mean-shifting IS estimator of \cite{gulisashvili2016}. More precisely, we consider the left-tail simulation of sums of correlated Log-normal variates, that is we aim to estimate the probability that a sum of correlated Log-normal variates is less than a sufficiently small threshold, and we provide an improved estimator of the IS scheme proposed in \cite{gulisashvili2016}. Our methodology is based on the asymptotic behavior results provided in \cite{gulisashvili2016} and considers the case where the left-tail of the distribution of the sum is determined by only one dominant component. In this setting, we improve the IS estimator of \cite{gulisashvili2016} by combining it with a control variate type of variance reduction technique. The introduced control variate is function of the dominant component that characterizes the tail behavior of the left-tail of the sum. The main result of the present work is that the improved estimator using the proposed control variate technique has the asymptotically vanishing relative error property which is the most desired property in the context of rare event simulations. Such a result represents a relevant contribution for the problem of estimating the left-tail of sums of Log-normal RVs since as it was mentioned above the existing estimators in the corresponding literature were only proved to achieve a weaker property; namely the asymptotic optimality criterion \cite{7328688,asmussen2016exponential,gulisashvili2016}. Simulation results show that our proposed approach yields a substantial amount of variance reduction compared to the mean-shifting IS approach of \cite{gulisashvili2016}, a reduction that increases as we decrease the probability of interest, i.e. as we decrease the threshold value to zero. 

The rest of the paper is organized as follows. In section II, we describe the problem setting and we review the concepts of IS and control variate. Section III is devoted to presenting the IS scheme of \cite{gulisashvili2016}. The main idea of the paper is provided in Section IV where we show how to improve the IS approach of \cite{gulisashvili2016} through the use of control variate. In the same section, the main result proving the asymptotically vanishing relative error property is provided. Finally, some selected simulation results are provided in Section V to assess the performance of the proposed approach which achieves a substantial amount of computational gain over the mean shifting IS of \cite{gulisashvili2016}.

\section{Problem Setting}
We consider a random vector $\bold{Y}=(Y_1,Y_2,\cdots,Y_N)^t$ with $N$-dimensional multivariate normal distribution with mean vector $\boldsymbol{\mu}$ and covariance matrix $\bold{\Sigma}$. We assume that $\bold{\Sigma}$ is positive definite. The probability density function (PDF) of the random vector $\bold{Y}$ is given as follows
\begin{align}
f(\boldsymbol{y})=\frac{\exp \left (-\frac{1}{2} \left (\boldsymbol{y}-\boldsymbol{\mu} \right )^t\bold{\Sigma}^{-1} \left ( \boldsymbol{y}-\boldsymbol{\mu}\right ) \right )}{\sqrt{(2\pi)^{N}|\bold{\Sigma}|}},
\end{align}
where $|\bold{\Sigma}|$ denotes the determinant of the matrix $\bold{\Sigma}$ and $\boldsymbol{y}^t$ is the transpose of the vector $\boldsymbol{y}$. Similarly to \cite{gulisashvili2016}, we define the following quantities. The elements of $\bold{\Sigma}$ and its inverse $\bold{\Sigma}^{-1}$ will be denoted by $\Sigma_{ij}$ and $\Sigma_{ij}^{-1}$, respectively. Moreover, we define $A_k=\sum_{j=1}^{N}{\Sigma_{kj}^{-1}}$. Let $\bar{ \boldsymbol{w}}$ be the unique solution of 
\begin{align}\label{mini}
\min_{\Delta_{N}}{\boldsymbol{w}^t \bold{\Sigma}}\boldsymbol{w}
\end{align}
with $\Delta_N= \{\boldsymbol{w} \in \mathbb{R}^N \text{  such that  } w_i \geq 0, i=1,2,\cdots,N, \text{ and} \sum_{i=1}^{N}{w_i}=1\} $. Let us denote by $\bar n$ the cardinal of the set $\{1,2,\cdots,N, \text{  such that  } \bar{w}_i \neq 0 \}$ and $\bar{I}=\{ \bar{k}(1),\cdots,\bar{k}(\bar{n})\}$ the corresponding indexes. Then, we denote by $\bar{\boldsymbol{\mu}} \in \mathbb{R}^{\bar{n}}$ the vector with entries $\bar{\mu}_i=\mu_{\bar{k}(i)}$ and $\bold{\bar{\Sigma}}$ the $\bar{n} \times \bar{n}$ matrix with elements $\bar{\Sigma}_{ij}=\Sigma_{\bar{k}(i)\bar{k}(j)}$. Moreover, we denote by $\bar{\bold{\Sigma}}^{-1}$ the inverse of $\bar{\bold{\Sigma}}$ with elements $\bar{\Sigma}_{ij}^{-1}$. Finally, the row sums of $\bar{\bold{\Sigma}}^{-1}$ is denoted by $\bar{A_k}=\sum_{j=1}^{\bar{n}}{\bar{\Sigma}_{kj}^{-1}}$. As it was mentioned in \cite{gulisashvili2016}, we assume without loss of generality that $\bar{I}=\{1,2,\cdots,\bar{n}\}$. 

Let us consider the Log-normal random vector $\bold{X}=(X_1,X_2,\cdots,X_N)^t$ such that $Y_i=\log (X_i)$, $i=1,2,\cdots,N$. Our aim is to efficiently estimate the left-tail of the sum of $X_i$'s, that is the probability that the sum of correlated Log-normal RVs with Gaussian copula falls below a sufficiently small threshold:
\begin{align}
\alpha (\gamma_{th})= P_f \left ( \sum_{i=1}^{N}{X_i} \leq \gamma_{th}\right ).
\end{align}
As it was mentioned previously, the above quantity is of paramount practical interest in the performance evaluation of wireless communication systems as it corresponds to the probability that the communication system is in outage, i.e. the probability that the system fails to operate correctly. The simplest method to estimate $\alpha(\gamma_{th})$ is to use the naive MC estimator. However, it is well-known that the naive estimator is computationally expensive when estimating rare events, i.e. region of sufficiently small values of $\alpha(\gamma_{th})$ which is the region of interest of the present work. 

To construct computationally efficient estimators, variance reduction techniques represent good alternatives \cite{opac-b1132466}. When appropriately employed, variance reduction techniques are known to yield accurate estimate with a smaller computational effort than the naive MC sampler. In the present work, we will consider two instances of variance reduction techniques which are IS and control variate. 
\subsection{Review of IS}
IS has been extensively employed for the efficient simulation of rare events \cite{BenRached2016,Juneja:2002:SHT:566392.566394,asmussen2016exponential,DBLP:journals/anor/AsmussenBJR11}. The main idea of IS is to introduce an IS distribution $g(\cdot)$ under which sampling is performed instead of the original PDF $f(\cdot)$. In fact, the probability of interest $\alpha(\gamma_{th})$ could be re-written as follows 
\begin{align}
\nonumber \alpha(\gamma_{th})&=\mathbb{E}_{f} \left [\bold{1}_{\left (\sum_{i=1}^{N}{\exp \left (Y_i \right )} \leq \gamma_{th} \right )} \right ]\\
&= \mathbb{E}_{g} \left [\bold{1}_{\left (\sum_{i=1}^{N}{\exp \left (Y_i \right )} \leq \gamma_{th} \right )} L\left (Y_1,Y_2,\cdots, Y_N \right ) \right ],
\end{align}
where $\bold{1}_{\left (\cdot \right )}$ denotes the indicator function, whereas $\mathbb{E}_{f}\left [ \cdot\right ]$ and $\mathbb{E}_{g}\left [ \cdot\right ]$ are the expectation operators under the PDFs $f(\cdot)$ and $g(\cdot)$, respectively. $L(\cdot)$ is the likelihood ratio which is defined as
\begin{align}
L\left (Y_1,Y_2,\cdots, Y_N \right )=\frac{f\left (Y_1,Y_2,\cdots, Y_N \right )}{g\left (Y_1,Y_2,\cdots, Y_N \right )}.
\end{align}
Following the above probability change of measure, the IS estimator is defined as:
\begin{align}
\hat \alpha_{IS}(\gamma_{th})=\frac{1}{M}\sum_{k=1}^{M}{\bold{1}_{\left (\sum_{i=1}^{N}{\exp \left (Y_i^{(k)} \right )} \leq \gamma_{th} \right )} L\left (Y_1^{(k)},Y_2^{(k)},\cdots, Y_N^{(k)} \right )},
\end{align}
where $\{(Y_1^{(k)},Y_2^{(k)},\cdots,Y_N^{(k)})^t\}_{k=1}^{M}$ are independent realizations of the random vector $\bold{Y}$ under the PDF $g(\cdot)$. The crucial step when using IS is the choice of the IS distribution $g(\cdot)$ that results in a variance reduction. In fact, a good IS distribution encourages the sampling of important realizations, that is samples that belong to the rare set $\{\sum_{i=1}^{N}{\exp \left ( Y_i\right )}\leq \gamma_{th}\}$, and also tries to maintain the likelihood ratio $L(\cdot)$ constant in the rare region. In the following section, we will review the IS technique developed in \cite{gulisashvili2016} for the estimation of $\alpha(\gamma_{th})$.
\subsection{Review of Control Variate}
Control variate is also a variance reduction technique that can, if adequately used, yield a substantial amount of variance reduction. This method could be combined with IS in order to further reduce the variance. Suppose we want to estimate $\alpha=\mathbb{E}_g \left [ T_{\gamma_{th}}\left (\bold{Y} \right )\right ]$ where $T_{\gamma_{th}}\left (\bold{Y} \right )= \bold{1}_{\left (\sum_{i=1}^{N}{\exp \left (Y_i \right )} \leq \gamma_{th} \right )} L\left (Y_1,\cdots, Y_N \right )$ is an IS estimator. Let $Z_{\gamma_{th}} \left ( \bold{Y}\right )$ be a control variate with a known expected value $P(\gamma_{th})$. The idea of control variate technique is to consider the following estimator of $\alpha(\gamma_{th})$
\begin{align}
T^{'}_{\gamma_{th}} \left (\bold{Y} \right )=T_{\gamma_{th}} \left (\bold{Y} \right )+\beta \left (Z_{\gamma_{th}} \left (\bold{Y} \right )-P(\gamma_{th}) \right),
\end{align}
where $\beta \in \mathbb{R}$. Obviously, the above estimator is an unbiased estimator of $\alpha(\gamma_{th})$. The variance of $T^{'}_{\gamma_{th}} \left (\bold{Y} \right )$ is given by
\begin{align}\label{variance}
\mathrm{var}_g \left [T^{'}_{\gamma_{th}} \left (\bold{Y} \right ) \right ]&=\mathrm{var}_g \left [T_{\gamma_{th}} \left (\bold{Y} \right ) \right ]+2 \beta \nonumber \mathrm{cov}_g \left [ T_{\gamma_{th}} \left (\bold{Y} \right ),Z_{\gamma_{th}} \left ( \bold{Y} \right )\right ]\\
&+\beta^2 \mathrm{var}_g \left [Z_{\gamma_{th}} \left ( \bold{Y}\right ) \right ].
\end{align}
Hence, the optimal value of $\beta$ is the value that minimizes the variance of $T^{'}_{\gamma_{th}} \left (\bold{Y} \right )$ and given, through a simple computation, by
\begin{align}\label{opt_beta}
\beta^*=-\frac{\mathrm{cov}_g \left [T_{\gamma_{th}} \left (\bold{Y} \right ),Z_{\gamma_{th}} \left ( \bold{Y}\right ) \right ]}{\mathrm{var}_g \left [Z_{\gamma_{th}} \left ( \bold{Y}\right ) \right ]}.
\end{align}
By plugging the optimal value $\beta^*$ into (\ref{variance}), we easily get that the variance of $T^{'}_{\gamma_{th}} \left (\bold{Y} \right )$ is given by
\begin{align}\label{optimal_reduction}
\mathrm{var}_g \left [T^{'}_{\gamma_{th}} \left (\bold{Y} \right ) \right ]= \left (1-\rho_{T_{\gamma_{th}} \left (\bold{Y} \right ),Z_{\gamma_{th}} \left ( \bold{Y} \right )}^2 \right ) \mathrm{var}_g \left [ T_{\gamma_{th}} \left (\bold{Y} \right )\right ],
\end{align}
where $\rho_{T_{\gamma_{th}} \left (\bold{Y} \right ),Z_{\gamma_{th}} \left ( \bold{Y} \right )}$ is the correlation coefficient between the two RVs $T_{\gamma_{th}} \left ( \bold{Y}\right )$ and $Z_{\gamma_{th}} \left (\bold{Y} \right )$. From the above result, we conclude that, in order to further reduce the variance of $T_{\gamma_{th}} \left (\bold{Y} \right )$, the RV $Z_{\gamma_{th}} \left ( \bold{Y}\right )$ has to be selected such that it is highly correlated with the RV $T_{\gamma_{th}} \left (\bold{Y} \right )$. In the next section, we will describe how the RV $Z_{\gamma_{th}} \left (\bold{Y} \right )$ is selected in order to achieve a substantial amount of variance reduction. The estimator of $\alpha(\gamma_{th})$ following  the control variate technique with any value of $\beta$ is as follows
\begin{align}
\hat \alpha_{IS-CV}(\gamma_{th})=\frac{1}{M}\sum_{k=1}^{M}{\left ( T_{\gamma_{th}} \left (\bold{Y^{(k)}} \right )+\beta \left (Z_{\gamma_{th}} \left ( \bold{Y^{(k)}}\right )-P(\gamma_{th}) \right) \right )}.
\end{align}
Note that the optimal value $\beta^*$ is generally unknown and has to be estimated via sample covariance and sample variance estimators. However, it is worth mentioning that estimating $\beta^*$ using the same simulated data used in getting the estimate introduces some dependence in the above estimate. However, it was shown in \cite{glasserman2004monte} that working with the estimated value of $\beta^*$ yield the same estimator's performances as working with $\beta^*$ for large number of samples. 
\subsection{Performance Metrics}
Many criteria have been used in practice in order to measure the efficiency of an unbiased estimator such as the asymptotically vanishing relative error, the bounded relative error, and the asymptotic optimality \cite{opac-b1123521}. 
%In practice, estimators with the bounded relative error property is difficult to construct which has lead researchers to settle for the asymptotic optimality criterion, a weaker property than the bounded relative error property. 
For any estimator $T^{'}_{\gamma_{th}} \left (\bold{Y} \right ) $ of $\alpha(\gamma_{th})$ with $\bold{Y}$ is distributed according to the PDF $g(\cdot)$, we have from the non-negativity of the variance of $T^{'}_{\gamma_{th}} \left (\bold{Y} \right ) $
\begin{align}
\mathbb{E}_{g} \left [ T^{'2}_{\gamma_{th}} \left (\bold{Y} \right ) \right ] \geq \alpha^2(\gamma_{th}).
\end{align}
Using the fact that $\log \left (\alpha(\gamma_{th}) \right )<0$, we get
\begin{align}
\frac{\log \left (\mathbb{E}_{g} \left [ T^{'2}_{\gamma_{th}} \left (\bold{Y} \right ) \right ] \right )}{\log \left (\alpha(\gamma_{th})\right )} \leq 2.
\end{align}
We say that the estimator $T^{'}_{\gamma_{th}} \left (\bold{Y} \right )$ achieves the asymptotic optimality property if 
\begin{align}\label{asym_op}
\lim_{\gamma_{th} \rightarrow 0}{\frac{\log \left (\mathbb{E}_{g} \left [ T^{'2}_{\gamma_{th}} \left (\bold{Y} \right ) \right ] \right )}{\log \left (\alpha(\gamma_{th}) \right )} }=2.
\end{align}
An equivalent definition of the asymptotic optimality criterion is: $\forall \epsilon >0 $, we have
\begin{align}
\lim_{\gamma_{th} \rightarrow 0}{\frac{\mathrm{var}_{g} \left [ T^{'}_{\gamma_{th}} \left (\bold{Y} \right ) \right ]}{\alpha^{2-\epsilon}(\gamma_{th})} }=0.
\end{align}
Two interesting interpretations could be deduced from the asymptotic optimality property. First of all, when $\alpha(\gamma_{th})^2 \rightarrow 0$ with an exponential rate, the second moment of $T^{'}_{\gamma_{th}} \left (\bold{Y} \right )$ converges to zero with the same exponential rate. This is the best exponential rate that the second moment may converge with. Second, when the asymptotic optimality property holds, the number of simulation runs $M$ required to meet a fixed accuracy requirement satisfies $M=o \left ( \alpha(\gamma_{th})^{-\epsilon}\right)$ for all $\epsilon >0$. Such a result ensures that an estimator with the asymptotic optimality criterion will certainly yield a substantial amount of variance reduction compared to naive MC simulation which requires a number of runs of the order of $\alpha(\gamma_{th})^{-1}$ to meet the same accuracy requirement. 

A stronger criterion than the asymptotic optimality is the bounded relative error. In fact, this property holds when
\begin{align}
\limsup_{\gamma_{th} \rightarrow 0}{\frac{\mathrm{var}_g \left [ T^{'}_{\gamma_{th}} \left ( \bold{Y}\right )\right ]}{\alpha^2(\gamma_{th})}} < +\infty.
\end{align}  
When the bounded relative error property holds, the number of simulation runs needed to meet a fixed accuracy requirement remains bounded regardless of how small $\alpha(\gamma_{th})$ is. 

A further stronger criterion is the asymptotically vanishing relative error which holds when
\begin{align}
\limsup_{\gamma_{th} \rightarrow 0}{\frac{\mathrm{var}_g \left [ T^{'}_{\gamma_{th}} \left ( \bold{Y}\right )\right ]}{\alpha^2(\gamma_{th})}}=0.
\end{align}
This means that the number of samples is getting smaller as we decrease the probability of interest while ensuring a fixed accuracy requirement. 
\section{Mean-Shifting Approach}
In this section, we review the mean-shifting IS scheme proposed in \cite{gulisashvili2016}. The main idea is to consider an IS distribution resulting from shifting the mean of the multivariate normal distribution. More precisely, the IS PDF is chosen to be a multivariate normal with mean vector $\boldsymbol{\mu}+\bold{\Lambda}$ and covariance matrix $\bold{\Sigma}$ with $\bold{\Lambda} \in \mathbb{R}^{N}$:
\begin{align}
g(\boldsymbol{y})=\frac{\exp \left (-\frac{1}{2} \left (\boldsymbol{y}-\boldsymbol{\mu}-\bold{\Lambda} \right )^t\bold{\Sigma}^{-1} \left ( \boldsymbol{y}-\boldsymbol{\mu}-\bold{\Lambda}\right ) \right )}{\sqrt{(2\pi)^{N}|\bold{\Sigma}|}}.
\end{align}
The likelihood ratio following this IS scheme is given as follows
\begin{align}
L \left (Y_1,Y_2,\cdots,Y_N \right )=\exp \left (-\bold{\Lambda}^t \bold{\Sigma}^{-1} (\bold{Y}-\boldsymbol{\mu})+\frac{1}{2} \bold{\Lambda}^t\bold{\Sigma}^{-1}\bold{\Lambda}\right).
\end{align}
Hence, the probability of interest $\alpha(\gamma_{th})$ is re-written as
\begin{align}
\nonumber \alpha(\gamma_{th})&= \mathbb{E}_{g} \left [\exp \left (-\bold{\Lambda}^t \bold{\Sigma}^{-1} (\bold{Y}-\boldsymbol{\mu})+\frac{1}{2} \bold{\Lambda}^t\bold{\Sigma}^{-1}\bold{\Lambda}\right) \bold{1}_{\left (\sum_{i=1}^{N}{\exp \left (Y_i \right )} \leq \gamma_{th} \right )} \right ]\\
&=\mathbb{E}_{f} \left [ \exp \left (-\bold{\Lambda}^t \bold{\Sigma}^{-1} (\bold{Y}-\boldsymbol{\mu})-\frac{1}{2} \bold{\Lambda}^t\bold{\Sigma}^{-1}\bold{\Lambda}\right) \bold{1}_{\left (\sum_{i=1}^{N}{\exp \left (Y_i+\Lambda_i \right )} \leq \gamma_{th} \right )} \right ].
\end{align}
By simple computation, it was shown in \cite{gulisashvili2016} that the second moment of $T_{\gamma_{th}}\left ( \bold{Y}\right)$ is given by
\begin{align}\label{second_moment}
%\nonumber \mathbb{E}_{g} \left [T_{\gamma_{th}}^2 \left (\bold{Y} \right ) \right ]&=\exp \left (\bold{\Lambda}^t \bold{\Sigma}^{-1} \bold{\Lambda} \right ) \tilde P \left (\sum_{i=1}^{N}{\exp \left (Y_i \right )} \leq \gamma_{th}\right )\\
\mathbb{E}_{g} \left [T_{\gamma_{th}}^2 \left (\bold{Y} \right ) \right ]=\exp \left (\bold{\Lambda}^t \bold{\Sigma}^{-1} \bold{\Lambda} \right ) P_f \left (\sum_{i=1}^{N}{\exp \left (Y_i-\Lambda_i \right )} \leq \gamma_{th}\right ).
\end{align}
%where $\tilde P \left ( \cdot \right )$ is the probability measure under which the random vector $\bold{Y}$ has a multivariate normal distribution with mean vector $\boldsymbol{\mu}-\bold{\Lambda}$ and covariance matrix $\bold{\Sigma}$.

The remaining step is to determine the value of $\bold{\Lambda}$ that guarantees a variance reduction compared to naive MC estimator. Obviously, the optimal value is to minimize the second moment of the RV $T_{\gamma_{th}} \left (\bold{Y} \right )$ with respect to $\bold{\Lambda}$. However, the second moment in (\ref{second_moment}) is not known explicitly. The authors in \cite{gulisashvili2016} have then proposed to find $\bold{\Lambda}$ that minimizes an asymptotic equivalent given by replacing the probability in (\ref{second_moment}) by an equivalent expression given in \cite{gulisashvili2016}. The value of $\bold{\Lambda}$ proposed in \cite{gulisashvili2016} is as follows
\begin{align}\label{opt_lam}
\Lambda_{k}^{*}=\sum_{i,j=1}^{\bar{n}}{\Sigma_{ki}\bar{\Sigma}^{-1}_{ij} \left ( \log \left (x \right ) -\log \left (\frac{\bar{A}_1+\cdots+\bar{A}_{\bar{n}}}{\bar{A}_j} \right )-\bar{\mu}_{j}\right )}.
\end{align}
With this value of $\bold{\Lambda}$, it was proven in \cite{gulisashvili2016} that the second moment of the IS estimator satisfies:
\begin{align}\label{bound_sec_moment}
\mathbb{E}_{g} \left [ T_{\gamma_{th}}^2 \left (\bold{Y} \right ) \right ] \leq C \alpha^2(\gamma_{th}) \left (\log \left ( \frac{1}{\gamma_{th}}\right ) \right )^{\bar{n}},
\end{align}
where $C$ is a constant that does not depend on $\gamma_{th}$. Thus, the asymptotic optimality property (\ref{asym_op}) holds. This result is based on the asymptotic behavior of $\alpha(\gamma_{th})$ as $\gamma_{th}  \rightarrow 0$, see Theorem 1 of \cite{gulisashvili2016}.

\section{Improved Algorithm Using Control Variate}
The results that will be shown in the present section represent the main contribution of our work. In fact, we aim here to combine the IS scheme presented in the previous section with a control variate technique to further achieve a variance reduction. We assume that the following assumption holds:\newline
\textbf{Assumption A}: there exist an index $i \in \{1,2,\cdots,N \}$ such that $\sqrt{\Sigma_{ii}} <\rho_{ij} \sqrt{\Sigma_{jj}}$ for all $j\neq i$, where $\rho_{ij}=\frac{\Sigma_{ij}}{\sqrt{\Sigma_{ii}\Sigma_{jj}}}$ denotes the correlation coefficient between $Y_i$ and $Y_j$.
\newline With no loss of generality, we may suppose that $i=1$. Under the above assumption, it was proven in \cite{gulisashvili2016} that the asymptotic behavior of the left-tail  of the sum is dominated by only one component that corresponds to the index $i=1$ in assumption A, that is
\begin{align}\label{dominant}
\nonumber \alpha(\gamma_{th}) &\underset{\gamma_{th}\rightarrow 0} \sim  P_f \left (X_1 \leq \gamma_{th} \right )\\
&  \underset{\gamma_{th}\rightarrow 0}\sim \frac{\sqrt{\Sigma_{11}}}{\sqrt{2 \pi} \log \left (\frac{1}{\gamma_{th}} \right )}\exp \left (-\frac{\left (\log \left ( \gamma_{th}\right )-\mu_1 \right )^2}{2\Sigma_{11}} \right ).
\end{align}
Moreover, under assumption A we have that $\bar{n}=1$ and the solution of the minimization problem (\ref{mini}) is given by 
\begin{align}
\nonumber \bar{w}_1 &=1 ,\\
\bar{w}_j& =0 \text{  for all  }j \neq 1.
\end{align}
Under assumption A, the value of $\bold{\Lambda}^*$ in (\ref{opt_lam}) simplifies to
\begin{align}\label{lam}
\nonumber \Lambda_1^* & =\log \left (\gamma_{th} \right )-\mu_1,\\
\Lambda_k^* &= \frac{\Sigma_{k1}}{\Sigma_{11}} \left (\log \left (\gamma_{th} \right )-\mu_1 \right ) \text{  for all   }k \neq 1.
\end{align}
%Finally, it was proven in \cite{gulisashvili2016} that the second moment of $T_{\gamma_{th}} \left (\bold{Y} \right )$ in (\ref{bound_sec_moment}) satisfies the following result
%\begin{align}
%\mathbb{E}_g \left [T_{\gamma_{th}}^2 \left ( \bold{Y}\right ) \right ] \leq C \alpha^2(\gamma_{th} ) \log \left (\frac{1}{\gamma_{th}} \right )
%\end{align} 
%where $C$ is a constant independent of $\gamma_{th}$.
Now, we present how the control variate technique could be combined with the previous IS scheme in order to achieve a further variance reduction. 
The control variable $Z_{\gamma_{th}} \left (\bold{Y} \right )$ is selected as follows:
\begin{align}\label{control_var}
Z_{\gamma_{th}}\left (\bold{Y} \right )=\bold{1}_{\left (\exp(Y_1) \leq \gamma_{th} \right )} L \left ( Y_1,Y_2,\cdots, Y_N\right ),
\end{align}
where the random vector $\bold{Y}$ is distributed according to $g(\cdot)$. The expected value $P(\gamma_{th})$ of the control variable $Z_{\gamma_{th}} \left ( \bold {Y}\right )$ is given through straightforward computation by
\begin{align}
\nonumber P (\gamma_{th}) &=\mathbb{E}_{g} \left [ Z_{\gamma_{th}} \left ( \bold {Y}\right )\right ]\\
\nonumber &= P_f \left (\exp(Y_1) \leq \gamma_{th} \right )\\
&= \Phi \left (\frac{\log \left (\gamma_{th} \right )-\mu_1}{\sqrt{\Sigma_{11}}} \right ),
\end{align}
where $\Phi (\cdot)$ is the cumulative distribution function of the standard normal distribution. The main idea of the above choice of the control variable is that, with the values of $\bold{\Lambda^*}$ in (\ref{lam}) and under the PDF $g(\cdot)$, $X_1$ still represents the dominant component in the sense that it determines the asymptotic behavior of the left-tail of the sum. Hence, it is likely that each realizations that belongs to the set $\{X_1 \leq \gamma_{th}\}$  will belong to the set $\{\sum_{i=1}^{N}{X_i} \leq \gamma_{th}\}$ for a sufficiently small threshold value $\gamma_{th}$.

The covariance between $T_{\gamma_{th}} \left ( \bold{Y}\right )$ and $Z_{\gamma_{th}} \left ( \bold{Y}\right )$ which is useful in the computation of the optimal value $\beta^*$ in (\ref{opt_beta}) is given by
\begin{align}
\nonumber &\mathrm{cov}_{g} \left [T_{\gamma_{th}} \left ( \bold{Y}\right ),Z_{\gamma_{th}} \left ( \bold{Y}\right ) \right ]\\
\nonumber &= \mathbb{E}_{g} \left [\bold{1}_{\left ( \sum_{i=1}^{N}{\exp (Y_i)} \leq \gamma_{th}\right )} \bold{1}_{\left (\exp (Y_1) \leq \gamma_{th} \right )} L^2 \left (Y_1,Y_2,\cdots,Y_N \right ) \right ] -\alpha(\gamma_{th}) P(\gamma_{th})\\
\nonumber &= \mathbb{E}_{g} \left [\bold{1}_{\left ( \sum_{i=1}^{N}{\exp (Y_i)} \leq \gamma_{th}\right )}L^2 \left (Y_1,Y_2,\cdots,Y_N \right ) \right ]-\alpha(\gamma_{th})P(\gamma_{th})\\
&= \mathrm{var}_{g} \left [T_{\gamma_{th}} \left ( \bold{Y}\right ) \right ]+\alpha^2(\gamma_{th})-\alpha(\gamma_{th})P(\gamma_{th}).
\end{align}
The variance of $Z_{\gamma_{th}} \left ( \bold{Y}\right )$ is given in a closed-form expression. In fact, via a simple computation, we have 
\begin{align}\label{sec_mom_z}
\nonumber &\mathbb{E}_{g} \left [Z_{\gamma_{th}}^2 \left (\bold{Y} \right )\right ]=\mathbb{E}_{f} \left [ \bold{1}_{\left (\exp \left ( Y_1\right ) \leq \gamma_{th} \right )} L \left (Y_1,Y_2,\cdots, Y_N \right ) \right ]\\
\nonumber &= \exp \left (\frac{1}{2} \bold{\Lambda^*}^t \bold{\Sigma}^{-1} \bold{\Lambda^*}+\bold{\Lambda^*}^t \bold{\Sigma}^{-1} \boldsymbol{\mu} \right ) \int_{\{\exp(y_1) \leq \gamma_{th} \}}{ \exp \left ( -\bold{\Lambda^*}^t \bold{\Sigma}^{-1} \boldsymbol{y}\right ) f(\boldsymbol{y})}dy1 \cdots dy_N\\
&= \exp \left ( \bold{\Lambda^*}^t \bold{\Sigma}^{-1} \bold{\Lambda^*}\right ) P_f \left ( \exp \left (Y_1-\Lambda_{1}^* \right ) \leq \gamma_{th}\right ).
\end{align}
Now, using the value of $\Lambda_1^*$ in (\ref{lam}), we get
\begin{align}\label{egz2}
\mathbb{E}_{g} \left [Z_{\gamma_{th}}^2 \left (\bold{Y} \right )\right ]= \exp \left ( \bold{\Lambda^*}^t \bold{\Sigma}^{-1} \bold{\Lambda^*}\right ) \Phi \left (\frac{2 \left (\log \left ( \gamma_{th}\right )-\mu_1 \right )}{\sqrt{\Sigma_{11}}} \right ).
\end{align}
The following lemma is very useful to study the performance of the estimator $T^{'}_{\gamma_{th}} \left (\bold{Y} \right )$
\begin{lem}
\hspace{2mm}There exists a constant $C_1$ such that
\begin{align}
\frac{\mathbb{E}_{g} \left [Z_{\gamma_{th}}^2 \left (\bold{Y} \right )\right ]-\mathbb{E}_{g} \left [T_{\gamma_{th}}^2 \left (\bold{Y} \right )\right ]}{\mathbb{E}_{g} \left [Z_{\gamma_{th}}^2 \left (\bold{Y} \right )\right ]} \leq C_1\sqrt{\log \left (\frac{1}{\gamma_{th}} \right )} \sqrt {P_1-P_2}.
\end{align}
with $P_1=\mathbb{E}_{g} \left [\bold{1}_{\left (\exp (Y_1) \gamma_{th} \right )} \right ]=\frac{1}{2}$ and $P_2=\mathbb{E}_{g} \left [\bold{1}_{\left ( \sum_{i=1}^{N}{\exp \left (Y_i \right )} \leq \gamma_{th} \right )} \right ]$.
\end{lem}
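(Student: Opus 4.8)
The plan is to exploit the elementary inclusion $\{\sum_{i=1}^N \exp(Y_i) \leq \gamma_{th}\} \subseteq \{\exp(Y_1) \leq \gamma_{th}\}$, which is valid because all the summands are positive. Introducing the difference event $D := \{\exp(Y_1)\leq\gamma_{th}\}\setminus\{\sum_{i=1}^N\exp(Y_i)\leq\gamma_{th}\}$, we have $\bold{1}_D = \bold{1}_{(\exp(Y_1)\leq\gamma_{th})} - \bold{1}_{(\sum_{i=1}^N\exp(Y_i)\leq\gamma_{th})}$, so that
\begin{align}
\mathbb{E}_{g}\left[Z_{\gamma_{th}}^2(\bold{Y})\right] - \mathbb{E}_{g}\left[T_{\gamma_{th}}^2(\bold{Y})\right] &= \mathbb{E}_{g}\left[\bold{1}_D\, L^2(Y_1,\cdots,Y_N)\right], \\
\mathbb{E}_{g}\left[\bold{1}_D\right] &= P_1 - P_2.
\end{align}
The first identity in particular makes the left-hand side of the lemma nonnegative. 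I would record these two facts first.

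Next I would apply Cauchy--Schwarz under $g$ to the factorization $\bold{1}_D L^2 = \bold{1}_D\cdot(\bold{1}_D L^2)$, and then enlarge $D$ to $\{\exp(Y_1)\leq\gamma_{th}\}$ in the $L^4$ factor:
\begin{align}
\mathbb{E}_{g}\left[\bold{1}_D L^2\right] \leq \sqrt{\mathbb{E}_{g}\left[\bold{1}_D\right]}\,\sqrt{\mathbb{E}_{g}\left[\bold{1}_D L^4\right]} \leq \sqrt{P_1-P_2}\;\sqrt{\mathbb{E}_{g}\left[\bold{1}_{(\exp(Y_1)\leq\gamma_{th})} L^4\right]}.
\end{align}
It then suffices to prove that $\mathbb{E}_{g}[\bold{1}_{(\exp(Y_1)\leq\gamma_{th})} L^4] \leq C_1^2\,\log(1/\gamma_{th})\,\big(\mathbb{E}_{g}[Z_{\gamma_{th}}^2(\bold{Y})]\big)^2$ for $\gamma_{th}$ small enough; substituting this into the chain above and dividing through by $\mathbb{E}_{g}[Z_{\gamma_{th}}^2(\bold{Y})]$ gives the claim.

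For the fourth moment I would redo the completion-of-squares computation that produced (\ref{sec_mom_z}), but with $L^m$ in place of $L$; the same change of measure yields, for every integer $m\geq1$,
\begin{align}
\mathbb{E}_{g}\left[\bold{1}_{(\exp(Y_1)\leq\gamma_{th})} L^m\right] = \exp\!\left(\tfrac{m(m-1)}{2}\,\bold{\Lambda}^{*t}\bold{\Sigma}^{-1}\bold{\Lambda}^{*}\right) P_f\!\left(\exp\!\left(Y_1-(m-1)\Lambda_1^{*}\right)\leq\gamma_{th}\right).
\end{align}
Using $\Lambda_1^{*}=\log(\gamma_{th})-\mu_1$ from (\ref{lam}) together with the identity $\bold{\Lambda}^{*t}\bold{\Sigma}^{-1}\bold{\Lambda}^{*}=(\log(\gamma_{th})-\mu_1)^2/\Sigma_{11}=:a^2$ (which follows from (\ref{lam}), since $\bold{\Sigma}^{-1}\bold{\Lambda}^{*}=\frac{\log(\gamma_{th})-\mu_1}{\Sigma_{11}}\bold{e}_1$ with $\bold{e}_1$ the first canonical basis vector), one obtains $\mathbb{E}_{g}[\bold{1}_{(\exp(Y_1)\leq\gamma_{th})}L^m]=\exp\!\big(\tfrac{m(m-1)}{2}a^2\big)\Phi(ma)$ with $a=(\log(\gamma_{th})-\mu_1)/\sqrt{\Sigma_{11}}\to-\infty$. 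In particular $m=2$ recovers (\ref{egz2}), and the ratio of interest becomes $\mathbb{E}_{g}[\bold{1}_{(\exp(Y_1)\leq\gamma_{th})}L^4]/(\mathbb{E}_{g}[Z_{\gamma_{th}}^2(\bold{Y})])^2 = e^{4a^2}\,\Phi(4a)/\Phi(2a)^2$.

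The main work — and the only delicate point — is the asymptotic estimate of $e^{4a^2}\Phi(4a)/\Phi(2a)^2$ as $a\to-\infty$. I would invoke the standard Mills-ratio bounds $\frac{\phi(t)}{t+1/t}\leq\Phi(-t)\leq\frac{\phi(t)}{t}$ for $t>0$, using the \emph{upper} bound with $t=-4a$ for $\Phi(4a)$ and the \emph{lower} bound with $t=-2a$ for $\Phi(2a)$; after the Gaussian exponentials cancel against $e^{4a^2}$, what remains is a ratio of polynomials in $a$ of order $|a|$. Since $|a|$ is comparable to $\log(1/\gamma_{th})$, this produces exactly the factor $\log(1/\gamma_{th})$ appearing inside $C_1^2$ — the square root in the lemma itself comes from the Cauchy--Schwarz step applied to $\mathbb{E}_g[\bold{1}_DL^4]$, not from here. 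Absorbing the numerical constant and $\Sigma_{11}^{-1/2}$ into $C_1$, and restricting to $\gamma_{th}$ small enough that, say, $|a|\geq1$ and $\log(1/\gamma_{th})+\mu_1\leq 2\log(1/\gamma_{th})$, closes the argument. The pitfall to watch is the orientation of the Mills bounds (an upper bound on $\Phi(4a)$ set against a lower bound on $\Phi(2a)^2$), so that the final inequality points in the direction demanded by the lemma.
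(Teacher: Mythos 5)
Your proposal is correct and follows essentially the same route as the paper's proof: Cauchy--Schwarz on the difference of indicators, an explicit completion-of-squares computation of $\mathbb{E}_{g}\left[\bold{1}_{\left(\exp(Y_1)\leq\gamma_{th}\right)}L^4\right]$, and Gaussian tail asymptotics to show the ratio against $\left(\mathbb{E}_{g}\left[Z_{\gamma_{th}}^2(\bold{Y})\right]\right)^2$ grows only like $\log(1/\gamma_{th})$. The only (harmless) difference is that you use two-sided Mills-ratio bounds where the paper invokes the asymptotic equivalence $\Phi(x)\sim\phi(x)/(-x)$, which if anything makes the constant $C_1$ more explicit.
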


\begin{proof}
Via Cauchy Schwartz inequality, it follows that
\begin{align}\label{cauchy}
\nonumber &\mathbb{E}_{g} \left [Z_{\gamma_{th}}^2 \left (\bold{Y} \right )\right ]-\mathbb{E}_{g} \left [T_{\gamma_{th}}^2 \left (\bold{Y} \right )\right ]= \mathbb{E}_{g} \left [ L^2 \left ( \bold{1}_{\left ( \exp \left ( Y_1 \right ) \leq \gamma_{th}\right )} -\bold{1}_{\left (\sum_{i=1}^{N}{\exp \left (Y_i \right )} \leq \gamma_{th}\right )}\right )\right ]\\
& \leq \sqrt{\mathbb{E} \left [L^4 \bold{1}_{ \left (\exp \left ( Y_1\right ) \leq \gamma_{th} \right )} \right ]} \sqrt{P_1-P_2} .
\end{align}
Now, let us compute $\mathbb{E} \left [L^4 \bold{1}_{ \left (\exp \left ( Y_1\right ) \leq \gamma_{th} \right )} \right ]$. Using a similar computation as (\ref{sec_mom_z}), we get
\begin{align}\label{egL4}
\nonumber & \mathbb{E}_{g} \left [L^4 \bold{1}_{ \left (\exp \left ( Y_1\right ) \leq \gamma_{th} \right )} \right ]=\mathbb{E}_{f} \left [L^3 \bold{1}_{ \left (\exp \left ( Y_1\right ) \leq \gamma_{th} \right )} \right ]\\
\nonumber &=\exp \left ( \frac{3}{2} \bold{\Lambda^*}^{t} \bold{\Sigma}^{-1}\bold{\Lambda^*}+3 \bold{\Lambda^*}^{t}\bold{\Sigma}^{-1}\boldsymbol{\mu}\right ) \int_{\{ \exp \left ( y_1\right )\leq \gamma_{th}\}}{\exp \left (-3 \bold{\Lambda^*}^{t}\bold{\Sigma}^{-1}\boldsymbol{y} \right ) f(\boldsymbol{y})dy_1 \cdots dy_N}\\
\nonumber &= \exp \left ( \frac{3}{2} \bold{\Lambda^*}^{t} \bold{\Sigma}^{-1}\bold{\Lambda^*}+3 \bold{\Lambda^*}^{t}\bold{\Sigma}^{-1}\boldsymbol{\mu}-\frac{1}{2} \boldsymbol{\mu}^t\bold{\Sigma}^{-1}\boldsymbol{\mu}+\frac{1}{2}(\boldsymbol{\mu}-3\bold{\Lambda^*})^t\bold{\Sigma}^{-1} (\boldsymbol{\mu}-3 \bold{\Lambda^*})\right )\\
\nonumber & \times \int_{\{\exp \left (y_1 \right )\leq \gamma_{th}\}}{f(\boldsymbol{y}+3\bold{\Lambda^*})} dy_1 \cdots dy_N\\
&= \exp \left ( 6 \bold{\Lambda^*}^t \bold{\Sigma}^{-1} \bold{\Lambda^*}\right ) \Phi \left (\frac{4 \left [\log \left (\gamma_{th} \right )-\mu_1 \right ]}{\sqrt{\Sigma_{11}}} \right ).
\end{align}
Now, we use the asymptotic behavior of $\Phi (\cdot)$ \cite{DBLP:journals/anor/AsmussenBJR11}
\begin{align}
\Phi(x) \sim \frac{1}{\sqrt{2\pi}(-x)}\exp \left ( -\frac{1}{2}  x^2\right )  \text{   as   } x \rightarrow -\infty.
\end{align}
By combining (\ref{egz2}), (\ref{cauchy}), and (\ref{egL4}), we get
\begin{align}
\frac{\mathbb{E}_{g} \left [Z_{\gamma_{th}}^2 \left (\bold{Y} \right )\right ]-\mathbb{E}_{g} \left [T_{\gamma_{th}}^2 \left (\bold{Y} \right )\right ]}{\mathbb{E}_{g} \left [Z_{\gamma_{th}}^2 \left (\bold{Y} \right )\right ]} \leq \frac{\exp \left (3 \bold{\Lambda^*}^{t} \bold{\Sigma}^{-1} \bold{\Lambda^*} \right ) \sqrt{\Phi \left (\frac{4 \left [\log \left (\gamma_{th} \right )-\mu_1 \right ]}{\sqrt{\Sigma_{11}}} \right )}\sqrt{P_1-P_2}}{\exp \left (\bold{\Lambda^*}^{t} \bold{\Sigma}^{-1}\bold{\Lambda^*} \right )\Phi \left ( \frac{2 \left [\log \left (\gamma_{th} \right )-\mu_1 \right ]}{\Sigma_{11}}\right )}.
\end{align}
From the expression of $\bold{\Lambda^*}$ in (\ref{lam}), we observe that $\bold{\Lambda^*}^{t} \bold{\Sigma}^{-1}\bold{\Lambda^*}=\frac{(\log (\gamma_{th})-\mu_1)^2}{\Sigma_{11}}$. Hence, it follows that
\begin{align}
\nonumber &\frac{\mathbb{E}_{g} \left [Z_{\gamma_{th}}^2 \left (\bold{Y} \right )\right ]-\mathbb{E}_{g} \left [T_{\gamma_{th}}^2 \left (\bold{Y} \right )\right ]}{\mathbb{E}_{g} \left [Z_{\gamma_{th}}^2 \left (\bold{Y} \right )\right ]} \leq C_1 \sqrt{P_1-P_2}\sqrt{\log \left ( \frac{1}{\gamma_{th}}\right )}\\
& \times \frac{\exp \left (2\frac{(\log (\gamma_{th})-\mu_1)^2}{\Sigma_{11}} \right ) \exp \left (-\frac{4}{\Sigma_{11}} \left ( \log(\gamma_{th})-\mu_1\right )^2 \right )}{\exp \left ( -\frac{2}{\Sigma_{11}} \left (\log \left ( \gamma_{th}\right ) -\mu_1\right )^2\right )}.
\end{align}
Thus, the proof is concluded.
\end{proof}

In the next lemma, we study the asymptotic behavior of $P_1-P_2$.
\begin{lem}
\hspace{2mm} For $i \in \{2,\cdots,N\}$, let us define $a_i=\frac{\Sigma_{i1}}{\Sigma_{11}}-1$ and $c_i=\exp \left (\mu_i -\frac{\Sigma_{i1}}{\Sigma_{11}}\mu_1\right )$ with $a_i>0$ from Assumption A. Let $c_{i_0}\gamma_{th}^{a_{i_0}}=\max_{i}{c_i \gamma_{th}^{a_i}}$, then, for a sufficiently small $\gamma_{th}$, there a constant $C_2$ such that
\begin{align}
P_1-P_2 \leq C_2 \gamma_{th}^{a_{i_0}}.
\end{align}
\end{lem}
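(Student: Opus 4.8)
The plan is to bound $P_1-P_2 = \mathbb{E}_g[\mathbf{1}_{(\exp(Y_1)\le\gamma_{th})}] - \mathbb{E}_g[\mathbf{1}_{(\sum_i\exp(Y_i)\le\gamma_{th})}]$ by controlling the probability, under $g(\cdot)$, of the ``bad'' event where the first component is small but the sum is not. Since $\{\sum_i \exp(Y_i)\le\gamma_{th}\}\subseteq\{\exp(Y_1)\le\gamma_{th}\}$, we have $P_1-P_2=\mathbb{P}_g\!\left(\exp(Y_1)\le\gamma_{th},\ \sum_{i=1}^N\exp(Y_i)>\gamma_{th}\right)$. A union bound gives $P_1-P_2\le\sum_{i=2}^N\mathbb{P}_g\!\left(\exp(Y_1)\le\gamma_{th},\ \exp(Y_i)>\tfrac{\gamma_{th}}{N}\right)$, so it suffices to bound each term and then absorb the worst one into the constant $C_2$.

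Next I would make the change of measure explicit. Under $g(\cdot)$ the pair $(Y_1,Y_i)$ is jointly Gaussian with means $\mu_1+\Lambda_1^*=\log(\gamma_{th})$ and $\mu_i+\Lambda_i^*=\mu_i+\tfrac{\Sigma_{i1}}{\Sigma_{11}}(\log\gamma_{th}-\mu_1)$, and the same covariances $\Sigma_{11},\Sigma_{ii},\Sigma_{i1}$ as under $f(\cdot)$. The event $\exp(Y_1)\le\gamma_{th}$ is $Y_1\le\log\gamma_{th}$, which under $g$ has probability $1/2$ (consistent with $P_1=\tfrac12$), and the event $\exp(Y_i)>\gamma_{th}/N$ is $Y_i>\log\gamma_{th}-\log N$. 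The key computation is to estimate the Gaussian probability of the intersection. Writing $Y_i = \mathbb{E}_g[Y_i\mid Y_1] + \text{(independent residual)}$, the conditional mean shift forces $Y_i$ to be, on the event $Y_1\le\log\gamma_{th}$, centered near $\mu_i+\tfrac{\Sigma_{i1}}{\Sigma_{11}}(\log\gamma_{th}-\mu_1)$ while we require $Y_i>\log\gamma_{th}-\log N$; the ``distance'' between these is of order $\bigl(1-\tfrac{\Sigma_{i1}}{\Sigma_{11}}\bigr)\log\tfrac1{\gamma_{th}} = -a_i\log\gamma_{th}$, which is large and positive since $a_i>0$. A standard Gaussian tail bound then yields an exponential-in-$(\log\tfrac1{\gamma_{th}})^2$ decay. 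Carefully tracking the exponent, the dominant factor comes out as $\exp\!\left(-\tfrac{a_i^2}{2\sigma_i^2}(\log\gamma_{th})^2 + O(\log\tfrac1{\gamma_{th}})\right)$ for an appropriate variance $\sigma_i^2$ — but I expect the cleaner route is to observe directly that $\mathbb{P}_g(\exp(Y_1)\le\gamma_{th},\exp(Y_i)>\gamma_{th}/N)\le\mathbb{E}_g[\mathbf{1}_{(\exp(Y_1)\le\gamma_{th})}\exp(Y_i-\Lambda_i^*)\cdot\text{something}]$ via a Chernoff/Markov step on the $Y_i$ coordinate, producing a bound of the form $c_i'\gamma_{th}^{a_i}$ directly, where $c_i'$ is a constant involving $\exp(\mu_i-\tfrac{\Sigma_{i1}}{\Sigma_{11}}\mu_1)=c_i$ and Gaussian normalization factors. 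Taking $\max_i c_i\gamma_{th}^{a_i}=c_{i_0}\gamma_{th}^{a_{i_0}}$ and summing over $i$ gives $P_1-P_2\le C_2\gamma_{th}^{a_{i_0}}$.

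The main obstacle is getting the polynomial order in $\gamma_{th}$ exactly right rather than an exponentially small bound that is ``too good'': one must resist the temptation to bound the intersection by the product of marginal tails (which would give a much smaller, but harder-to-use-uniformly, quantity) and instead peel off only the $Y_i$ coordinate via a Markov-type inequality, $\mathbb{P}_g(\exp(Y_i)>t)\le t^{-1}\mathbb{E}_g[\exp(Y_i)]$ restricted suitably, so that the resulting bound is genuinely of the advertised form $\gamma_{th}^{a_{i_0}}$ with an explicit constant and no leftover logarithmic factors that would spoil the subsequent combination with Lemma~2. A secondary technical point is handling the ``for sufficiently small $\gamma_{th}$'' clause: the asymptotic equivalence $\Phi(x)\sim\frac{1}{\sqrt{2\pi}(-x)}e^{-x^2/2}$ and the replacement of constants by the single $C_2$ are only valid past some threshold, so the statement is asymptotic and the constant $C_2$ may depend on $N$ and on all the $c_i,a_i$.
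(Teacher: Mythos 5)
Your opening set inclusion is false, and the rest of the argument rests on it. From $\exp(Y_1)\le\gamma_{th}$ and $\sum_{i=1}^{N}\exp(Y_i)>\gamma_{th}$ you can only conclude $\sum_{i=2}^{N}\exp(Y_i)>\gamma_{th}-\exp(Y_1)$, and this right-hand side can be arbitrarily close to $0$. Concretely, with $N=2$ the realization $\exp(Y_1)=0.9\,\gamma_{th}$, $\exp(Y_2)=0.2\,\gamma_{th}$ lies in $\{\exp(Y_1)\le\gamma_{th},\ \sum_i\exp(Y_i)>\gamma_{th}\}$ but not in $\{\exp(Y_2)>\gamma_{th}/2\}$, so the union bound $P_1-P_2\le\sum_{i\ge2}\mathbb{P}_g\left(\exp(Y_1)\le\gamma_{th},\ \exp(Y_i)>\gamma_{th}/N\right)$ does not hold. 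There is also a self-diagnostic you passed over: your own computation shows each term of that union is either super-polynomially small (Gaussian tail, order $\exp(-c(\log\gamma_{th})^2)$) or $O(\gamma_{th}^{a_i})$ with no logarithmic loss (Markov), so the union bound would ``prove'' something at least as strong as the lemma essentially for free. But $P_1-P_2$ is genuinely of order $\gamma_{th}^{a_{i_0}}$: the dominant mass sits exactly in the configurations your union bound throws away, namely $\exp(Y_1)$ landing in the thin slab $[\gamma_{th}-S,\gamma_{th}]$ with $S=\sum_{i\ge2}\exp(Y_i)$ small but positive, where no individual $\exp(Y_i)$ need exceed any fixed fraction of $\gamma_{th}$.

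The paper's proof captures precisely this slab mechanism. After recentering by $\bold{\Lambda}^*$ it rewrites $P_1-P_2$ as $P\left(\exp(Y_1)\le1,\ \exp(Y_1)+\sum_{i\ge2}c_i\gamma_{th}^{a_i}\exp(Y_i)\ge1\right)$, dominates the correlated density by an independent one, conditions on $Y_2,\dots,Y_N$, and bounds the conditional slab probability $\frac12-\Phi\left(\sqrt{\lambda}\log\left(1-c_{i_0}\gamma_{th}^{a_{i_0}}z\right)\right)$ by its Lipschitz constant times $c_{i_0}\gamma_{th}^{a_{i_0}}z$; integrating in $z$ produces the leading term $c_{i_0}\gamma_{th}^{a_{i_0}}\mathbb{E}[Z]$, and the leftover regions are controlled by the right tail of a Log-normal sum, which decays faster than any power of $\gamma_{th}$. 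Your Markov-type instinct is not wasted --- the factor $c_i\gamma_{th}^{a_i}$ does come from $\mathbb{E}_g[\exp(Y_i)]$ under the shifted measure --- but it must be applied to the expected width of the slab after conditioning on $Y_2,\dots,Y_N$, not to separate marginal tail events of the $Y_i$.
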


\begin{proof}
Let us first re-write $P_1-P_2$ as follows:
\begin{align}
P_1-P_2=P_g\left (\exp(Y_1)\leq \gamma_{th},\sum_{i=1}^{N}{\exp(Y_i)}\geq \gamma_{th} \right ).
\end{align}
%with $\bold{Y}$ is a multivariate Gaussian vector with mean vector $\boldsymbol{\mu}+\bold{\Lambda}$ and covariance matrix $\bold{\Sigma}$.
Using the value of $\bold{\Lambda^*}$ in (\ref{lam}), the above expression could be expressed as
\begin{align}
P_1-P_2=P \left (\exp(Y_1) \leq 1, \exp (Y_1)+\sum_{i=2}^{N}{c_i \gamma_{th}^{a_i}\exp(Y_i)} \geq 1 \right ),
\end{align}
with $P(\cdot)$ is the probability measure under which $\bold{Y}$ in this case is a multivariate Gaussian vector with zero mean and covariance matrix $\bold{\Sigma}$. Let us denote by $\lambda$ the minimum eigenvalue of $\bold{\Sigma}^{-1}$, then the above expression could be upper-bounded by
\begin{align}
P_1-P_2 \leq d_1 \tilde{P} \left ( \exp(Y_1) \leq 1, \exp(Y_1)+c_{i_0}\gamma_{th}^{a_{i_0}} \sum_{i=2}^{N}{\exp(Y_i)} \geq 1\right ),
\end{align} 
with $d_1=\frac{1}{\sqrt{|\bold{\Sigma}|\lambda^N}}$ and $\tilde{P}(\cdot)$ is the probability measure under which $\bold{Y}$ is now an independent Gaussian random vector with zero mean and covariance matrix $I_{N}/\lambda$ where $I_N$ denotes the identity matrix of order $N$. Note that for a sufficiently small threshold, the index $i_0$ is independent of $\gamma_{th}$. In the other hand, the probability in the right-hand side could be written as
\begin{align}
\nonumber & \tilde{P} \left ( \exp(Y_1) \leq 1, \exp(Y_1)+c_{i_0}\gamma_{th}^{a_{i_0}} \sum_{i=2}^{N}{\exp(Y_i)} \geq 1\right )\\
\nonumber &=\tilde{P} \left ( \exp(Y_1) \leq 1, \exp(Y_1)+c_{i_0}\gamma_{th}^{a_{i_0}} \sum_{i=2}^{N}{\exp(Y_i)} \geq 1, c_{i_0}\gamma_{th}^{a_{i_0}} \sum_{i=2}^{N}{\exp(Y_i)} \leq 1 \right )\\
\nonumber & +\tilde{P} \left ( \exp(Y_1) \leq 1, \exp(Y_1)+c_{i_0}\gamma_{th}^{a_{i_0}} \sum_{i=2}^{N}{\exp(Y_i)} \geq 1, c_{i_0}\gamma_{th}^{a_{i_0}} \sum_{i=2}^{N}{\exp(Y_i)} \geq 1\right )\\
&=I_1(\gamma_{th})+I_2(\gamma_{th}).
\end{align}
Let $\tilde {f} (\cdot)$ be the univariate normal PDF with mean zero and variance $1/\lambda$. Then, the quantity $I_1(\gamma_{th})$ is expressed as follows
\begin{align}
\nonumber & I_1(\gamma_{th})\\
\nonumber &=\int_{\{c_{i_0} \gamma_{th}^{a_{i_0}}\sum_{i=2}^{N}{\exp (y_i)} \leq 1\}}{P \left ( \exp(Y_1) \leq 1, \exp(Y_1) \geq 1-c_{i_0}\gamma_{th}^{a_{i_0}} \sum_{i=2}^{N}{\exp(y_i)}\right ) }\\
\nonumber & \times \tilde {f}(y_2)\cdots  \tilde{f}(y_N)\\
\nonumber &=\int_{\{c_{i_0} \gamma_{th}^{a_{i_0}}\sum_{i=2}^{N}{\exp (y_i)} \leq 1\}}{\left [ \frac{1}{2}-\Phi \left ( \sqrt{\lambda} \log \left (1-c_{i_0}\gamma_{th}^{a_{i_0}}\sum_{i=2}^{N}{\exp(y_i)} \right )\right )\right ]} \\
\nonumber & \times \tilde {f}(y_2)\cdots  \tilde{f}(y_N)\\
\nonumber &=\int_{\{c_{i_0} \gamma_{th}^{a_{i_0}}\sum_{i=2}^{N}{\exp (y_i)} \leq \frac{1}{2}\}}{\left [ \frac{1}{2}-\Phi \left ( \sqrt{\lambda} \log \left (1-c_{i_0}\gamma_{th}^{a_{i_0}}\sum_{i=2}^{N}{\exp(y_i)} \right )\right )\right ] }\\
\nonumber & \times \tilde {f}(y_2)\cdots  \tilde{f}(y_N)\\
\nonumber & +\int_{\{ \frac{1}{2} \leq c_{i_0} \gamma_{th}^{a_{i_0}}\sum_{i=2}^{N}{\exp (y_i)} \leq 1\}}{\left [ \frac{1}{2}-\Phi \left ( \sqrt{\lambda} \log \left (1-c_{i_0}\gamma_{th}^{a_{i_0}}\sum_{i=2}^{N}{\exp(y_i)} \right )\right )\right ]}\\
\nonumber & \times  \tilde {f}(y_2)\cdots  \tilde{f}(y_N)\\
\nonumber &\leq \int_{\{c_{i_0} \gamma_{th}^{a_{i_0}}\sum_{i=2}^{N}{\exp (y_i)} \leq \frac{1}{2}\}}{\left [ \frac{1}{2}-\Phi \left ( \sqrt{\lambda} \log \left (1-c_{i_0}\gamma_{th}^{a_{i_0}}\sum_{i=2}^{N}{\exp(y_i)} \right )\right )\right ] }\\
\nonumber & \times \tilde {f}(y_2)\cdots  \tilde{f}(y_N)\\
&+ \tilde{P} \left (\sum_{i=2}^{N}{\exp (Y_i)} \geq \frac{1}{2c_{i_0}\gamma_{th}^{a_{i_0}}}\right ).
\end{align}
Let $Z=\sum_{i=2}^{N}{\exp(Y_i)}$ and $\tilde{f}_{Z}(\cdot)$ its corresponding PDF, then we have
\begin{align}
\nonumber & I_1(\gamma_{th}) \leq \int_{\{c_{i_0} \gamma_{th}^{a_{i_0}}z \leq \frac{1}{2}\}}{\left [ \frac{1}{2}-\Phi \left ( \sqrt{\lambda} \log \left (1-c_{i_0}\gamma_{th}^{a_{i_0}}z \right )\right )\right ] \tilde {f}_Z(z) dz}\\
&+ \tilde{P} \left (Z \geq \frac{1}{2c_{i_0}\gamma_{th}^{a_{i_0}}}\right ).
\end{align}
Let $g_{\gamma_{th}}(z)=\Phi \left ( \sqrt{\lambda} \log \left (1-c_{i_0}\gamma_{th}^{a_{i_0}}z \right )\right )$. Via a simple computation, we prove that for all $z \leq \frac{1}{2 c_{i_0}\gamma_{th}^{a_{i_0}}}$ 
\begin{align}
\left | g_{\gamma_{th}}^{'}(z) \right | \leq \sqrt{\frac{2\lambda}{\pi}} c_{i_0} \gamma_{th}^{a_{i_0}}.
\end{align}
Therefore, it follows that 
\begin{align}\label{i1}
I_1(\gamma_{th}) \leq \sqrt{\frac{2\lambda}{\pi}} c_{i_0} \gamma_{th}^{a_{i_0}} \mathbb{E}_{\tilde {f}_{z}} \left [Z \right ]+\tilde{P} \left (\sum_{i=2}^{N}{\exp (Y_i)} \geq \frac{1}{2c_{i_0} \gamma_{th}^{a_{i_0}}}\right ).
\end{align}
In the other hand, we have
\begin{align}\label{i2}
\nonumber & I_2(\gamma_{th})\\
\nonumber &=\tilde{P} \left ( \exp(Y_1) \leq 1, \exp(Y_1)+c_{i_0}\gamma_{th}^{a_{i_0}} \sum_{i=2}^{N}{\exp(Y_i)} \geq 1, c_{i_0}\gamma_{th}^{a_{i_0}} \sum_{i=2}^{N}{\exp(Y_i)} \geq 1\right )\\
\nonumber &= \tilde{P} \left ( \exp(Y_1) \leq 1, c_{i_0}\gamma_{th}^{a_{i_0}} \sum_{i=2}^{N}{\exp(Y_i)} \geq 1\right )\\
&= \frac{1}{2} \tilde{P} \left ( \sum_{i=2}^{N}{\exp (Y_i)} \geq \frac{1}{c_{i_0}\gamma_{th}^{a_{i_0}}}\right ).
\end{align}
Therefore, by combining (\ref{i1}) and (\ref{i2}), we get
\begin{align}
\nonumber &P_1-P_2 \leq d_1 \Big ( \sqrt{\frac{2\lambda}{\pi}} c_{i_0} \gamma_{th}^{a_{i_0}} \mathbb{E}_{\bar{f}} \left [Z \right ]+ \frac{1}{2} P \left ( \sum_{i=2}^{N}{\exp (Y_i)} \geq \frac{1}{c_{i_0}\gamma_{th}^{a_{i_0}}}\right )\\
& +P \left ( \sum_{i=2}^{N}{\exp (Y_i)} \geq \frac{1}{2c_{i_0}\gamma_{th}^{a_{i_0}}}\right )\Big ).
\end{align}
In the other hand, the asymptotic behavior of the right-tail of the sum of Log-normal variates is given by \cite{RePEc:eee:stapro:v:78:y:2008:i:16:p:2709-2714}
\begin{align}
P \left ( \sum_{i=2}^{N}{\exp (Y_i)} \geq \frac{1}{c_{i_0}\gamma_{th}^{a_{i_0}}}\right ) \sim \frac{c_1}{-\log(\gamma_{th})} \exp \left ( -\frac{\lambda(-\log (c_{i_0}\gamma_{th}^{a_{i_0}}))^2}{2}\right ) \text{  as  } \gamma_{th} \rightarrow 0.
\end{align}
Hence, the proof is concluded.
\end{proof}
The results in Lemma 1 and Lemma 2 serve to study the goodness of the proposed estimator $T_{\gamma_{th}}^{'} \left (\bold{Y} \right )$. The next proposition provides interesting results on the correlation coefficient as well as the optimal value $\beta^*$.

\begin{prop}
\hspace{2mm}The correlation coefficient $\rho_{T_{\gamma_{th}} \left (\bold{Y} \right ),Z_{\gamma_{th}} \left (\bold{Y} \right )}$ between $T_{\gamma_{th}} \left (\bold{Y} \right )$ and $Z_{\gamma_{th}}\left (\bold{Y} \right )$ goes to $1$ whereas the optimal value $\beta^*$ approaches $-1$ as $\gamma_{th} \rightarrow 0$. 
\end{prop}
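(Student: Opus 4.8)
The plan is to reduce everything to the two second moments $\mathbb{E}_{g}\left[T_{\gamma_{th}}^{2}\left(\bold{Y}\right)\right]$ and $\mathbb{E}_{g}\left[Z_{\gamma_{th}}^{2}\left(\bold{Y}\right)\right]$, together with the small quantities $\alpha(\gamma_{th})$ and $P(\gamma_{th})$, using the identities already recorded. From the computation of $\mathrm{cov}_{g}\left[T_{\gamma_{th}}\left(\bold{Y}\right),Z_{\gamma_{th}}\left(\bold{Y}\right)\right]$ above one has $\mathrm{cov}_{g}\left[T_{\gamma_{th}},Z_{\gamma_{th}}\right]=\mathbb{E}_{g}\left[T_{\gamma_{th}}^{2}\right]-\alpha(\gamma_{th})P(\gamma_{th})$, together with $\mathrm{var}_{g}\left[T_{\gamma_{th}}\right]=\mathbb{E}_{g}\left[T_{\gamma_{th}}^{2}\right]-\alpha^{2}(\gamma_{th})$ and $\mathrm{var}_{g}\left[Z_{\gamma_{th}}\right]=\mathbb{E}_{g}\left[Z_{\gamma_{th}}^{2}\right]-P^{2}(\gamma_{th})$. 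The proposition follows once we establish (i) $\mathbb{E}_{g}\left[T_{\gamma_{th}}^{2}\right]/\mathbb{E}_{g}\left[Z_{\gamma_{th}}^{2}\right]\to 1$ and (ii) $\alpha^{2}(\gamma_{th})/\mathbb{E}_{g}\left[Z_{\gamma_{th}}^{2}\right]\to 0$, $\alpha(\gamma_{th})P(\gamma_{th})/\mathbb{E}_{g}\left[Z_{\gamma_{th}}^{2}\right]\to 0$, and $P^{2}(\gamma_{th})/\mathbb{E}_{g}\left[Z_{\gamma_{th}}^{2}\right]\to 0$, as $\gamma_{th}\to 0$.

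For (i), I would first note the pointwise bound $0\le T_{\gamma_{th}}\left(\bold{Y}\right)\le Z_{\gamma_{th}}\left(\bold{Y}\right)$, which holds because the likelihood ratio $L$ is nonnegative and $\{\sum_{i=1}^{N}\exp(Y_{i})\le\gamma_{th}\}\subseteq\{\exp(Y_{1})\le\gamma_{th}\}$; hence $\mathbb{E}_{g}\left[Z_{\gamma_{th}}^{2}\right]\ge\mathbb{E}_{g}\left[T_{\gamma_{th}}^{2}\right]$ and the ratio in (i) lies in $(0,1]$. Then Lemma~1 and Lemma~2 combine to give $1-\mathbb{E}_{g}\left[T_{\gamma_{th}}^{2}\right]/\mathbb{E}_{g}\left[Z_{\gamma_{th}}^{2}\right]\le C_{1}\sqrt{\log(1/\gamma_{th})}\sqrt{P_{1}-P_{2}}\le C_{1}\sqrt{C_{2}}\,\gamma_{th}^{a_{i_{0}}/2}\sqrt{\log(1/\gamma_{th})}$, which tends to $0$ since $a_{i_{0}}>0$; this proves (i).

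For (ii), I would use the closed form (\ref{egz2}), the identity $\bold{\Lambda^*}^{t}\bold{\Sigma}^{-1}\bold{\Lambda^*}=\frac{(\log(\gamma_{th})-\mu_{1})^{2}}{\Sigma_{11}}$ coming from (\ref{lam}), and the Gaussian-tail equivalence $\Phi(x)\sim\frac{1}{\sqrt{2\pi}(-x)}\exp\left(-\frac{1}{2}x^{2}\right)$ as $x\to-\infty$. Writing $t=\frac{\log(\gamma_{th})-\mu_{1}}{\sqrt{\Sigma_{11}}}\to-\infty$, this yields $\mathbb{E}_{g}\left[Z_{\gamma_{th}}^{2}\right]=e^{t^{2}}\Phi(2t)\sim\frac{1}{2\sqrt{2\pi}(-t)}e^{-t^{2}}$, while $P(\gamma_{th})=\Phi(t)\sim\frac{1}{\sqrt{2\pi}(-t)}e^{-t^{2}/2}$, so that $P^{2}(\gamma_{th})/\mathbb{E}_{g}\left[Z_{\gamma_{th}}^{2}\right]=O\left(\frac{1}{-t}\right)\to 0$; and (\ref{dominant}) gives $\alpha(\gamma_{th})\sim P(\gamma_{th})$, hence $\alpha^{2}(\gamma_{th})/\mathbb{E}_{g}\left[Z_{\gamma_{th}}^{2}\right]\to 0$ and $\alpha(\gamma_{th})P(\gamma_{th})/\mathbb{E}_{g}\left[Z_{\gamma_{th}}^{2}\right]\to 0$ as well. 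Dividing the three identities of the first paragraph by $\mathbb{E}_{g}\left[Z_{\gamma_{th}}^{2}\right]$ and applying (i) and (ii) shows that $\mathrm{cov}_{g}\left[T_{\gamma_{th}},Z_{\gamma_{th}}\right]$, $\mathrm{var}_{g}\left[T_{\gamma_{th}}\right]$ and $\mathrm{var}_{g}\left[Z_{\gamma_{th}}\right]$ are all asymptotically equivalent to $\mathbb{E}_{g}\left[Z_{\gamma_{th}}^{2}\right]$ (in particular the two variances are positive for $\gamma_{th}$ small enough, so $\rho$ and $\beta^{*}$ are well defined). Substituting into $\rho_{T_{\gamma_{th}},Z_{\gamma_{th}}}=\frac{\mathrm{cov}_{g}\left[T_{\gamma_{th}},Z_{\gamma_{th}}\right]}{\sqrt{\mathrm{var}_{g}\left[T_{\gamma_{th}}\right]\mathrm{var}_{g}\left[Z_{\gamma_{th}}\right]}}$ and into (\ref{opt_beta}) then gives $\rho_{T_{\gamma_{th}},Z_{\gamma_{th}}}\to 1$ and $\beta^{*}\to -1$.

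The step I expect to require the most care is (ii): one must verify that the mean-shift importance sampling estimator is only asymptotically optimal and \emph{not} of bounded relative error, i.e. that $\mathbb{E}_{g}\left[Z_{\gamma_{th}}^{2}\right]$ (equivalently $\mathbb{E}_{g}\left[T_{\gamma_{th}}^{2}\right]$, by (i)) genuinely dominates $\alpha^{2}(\gamma_{th})$. This is precisely the mechanism that forces the covariance and the two variances to share the same leading order and hence drives $\rho\to 1$. Concretely, the one place where a constant or a sign can slip is the Gaussian-tail bookkeeping in (\ref{egz2})—tracking the factor $2$ inside $\Phi$ and its cancellation against $e^{t^{2}}$; everything else reduces to algebraic manipulation of the identities listed above.
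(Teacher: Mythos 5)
Your proposal is correct and follows essentially the same route as the paper: both arguments reduce $\rho$ and $\beta^*$ to ratios of $\mathbb{E}_{g}\left[T_{\gamma_{th}}^{2}\right]$, $\mathbb{E}_{g}\left[Z_{\gamma_{th}}^{2}\right]$, $\alpha(\gamma_{th})$ and $P(\gamma_{th})$, then invoke Lemmas 1 and 2 for $\mathbb{E}_{g}\left[T_{\gamma_{th}}^{2}\right]\sim\mathbb{E}_{g}\left[Z_{\gamma_{th}}^{2}\right]$ and the asymptotics (\ref{dominant}), (\ref{egz2}) to show that $\alpha(\gamma_{th})P(\gamma_{th})$, $\alpha^{2}(\gamma_{th})$ and $P^{2}(\gamma_{th})$ are negligible relative to the second moments. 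You simply fill in more of the Gaussian-tail bookkeeping (and the pointwise bound $T_{\gamma_{th}}\le Z_{\gamma_{th}}$) that the paper leaves implicit.
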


\begin{proof}
Let us first prove that $\beta^*$ approaches $-1$ as $\gamma_{th}$ goes to zero. In fact, from the expression of $\beta^*$ in (\ref{opt_beta}), we have
\begin{align}
\beta^*=-\frac{\mathbb{E}_{g} \left [T_{\gamma_{th}}^2 \left (\bold{Y} \right ) \right ]-\alpha(\gamma_{th}) P(\gamma_{th})}{\mathrm{var}_{g} \left [Z_{\gamma_{th}} \left ( \bold{Y}\right ) \right ]}.
\end{align}
From the results of Lemma 1 and Lemma 2, we have that $\mathbb{E}_{g} \left [Z_{\gamma_{th}}^2 \left (\bold{Y} \right ) \right ] \sim \mathbb{E}_{g} \left [T_{\gamma_{th}}^2 \left (\bold{Y} \right ) \right ]$ as $\gamma_{th} \rightarrow 0$. Moreover, the results in (\ref{dominant}) and (\ref{egz2}) implies that $\alpha^2/\mathbb{E}_{g} \left [Z_{\gamma_{th}}^2 \left (\bold{Y} \right ) \right ] \rightarrow 0$ as $\gamma_{th}$ goes to zero. In particular, this shows that  $\mathrm{var}_{g} \left [Z_{\gamma_{th}} \left (\bold{Y} \right ) \right ] \sim \mathbb{E}_{g} \left [T_{\gamma_{th}}^2 \left (\bold{Y} \right ) \right ]$ as $\gamma_{th} \rightarrow 0$. By combining these results and the fact that $\alpha(\gamma_{th}) \sim P(\gamma_{th})$, we conclude that $\beta^*$ approaches $-1$ as $\gamma_{th}\rightarrow 0$.

Now, let us prove that $\rho_{T_{\gamma_{th}} \left (\bold{Y} \right ),Z_{\gamma_{th}} \left (\bold{Y} \right )}$ approaches $1$ as $\gamma_{th} \rightarrow 0$. The expression of $\rho_{T_{\gamma_{th}} \left (\bold{Y} \right ),Z_{\gamma_{th}} \left (\bold{Y} \right )}$ is given by
\begin{align}
\rho_{T_{\gamma_{th}} \left (\bold{Y} \right ),Z_{\gamma_{th}} \left (\bold{Y} \right )}=\frac{\mathbb{E}_{g} \left [T_{\gamma_{th}}^2 \left (\bold{Y} \right ) \right ]-\alpha(\gamma_{th}) P(\gamma_{th})}{\sqrt{\mathrm{var}_{g} \left [T_{\gamma_{th}} \left (\bold{Y} \right ) \right ]\mathrm{var}_{g} \left [Z_{\gamma_{th}} \left (\bold{Y} \right ) \right ]}}.
\end{align}
Given that $\mathrm{var}_{g} \left [Z_{\gamma_{th}} \left (\bold{Y} \right ) \right ] \sim \mathrm{var}_{g} \left [T_{\gamma_{th}} \left (\bold{Y} \right ) \right ]$ as $\gamma_{th} \rightarrow 0$ (this follows from Lemma 1 and 2 and the fact that $\alpha^2/\mathbb{E}_{g} \left [Z_{\gamma_{th}}^2 \left (\bold{Y} \right ) \right ] \rightarrow 0$  and $\alpha(\gamma_{th}) \sim P(\gamma_{th})$ as $\gamma_{th}$ goes to $0$), it follows that
\begin{align}
\rho_{T_{\gamma_{th}} \left (\bold{Y} \right ),Z_{\gamma_{th}} \left (\bold{Y} \right )} \sim \frac{\mathbb{E}_{g} \left [T_{\gamma_{th}}^2 \left (\bold{Y} \right ) \right ]-\alpha(\gamma_{th}) P(\gamma_{th})}{\mathrm{var}_{g} \left [Z_{\gamma_{th}} \left ( \bold{Y}\right ) \right ]}.
\end{align}
Hence, following the same arguments as in the first part of the proof, we conclude that $\rho_{T_{\gamma_{th}} \left (\bold{Y} \right ),Z_{\gamma_{th}} \left (\bold{Y} \right )}$ approaches $1$ as $\gamma_{th}$ goes to zero.
\end{proof}
The main conclusions that can be drawn from the above results are the following. Since $\rho$ approaches $1$, the improved estimator is guaranteed to achieve a variance reduction, with respect to the mean shifting IS scheme, that increases as we decrease the threshold value. The second interesting point is that we may select $\beta$ to be equal to $-1$ instead of working with the optimal unknown value $\beta^*$. In fact, in addition to retrieving the same estimator's performances as with $\beta=\beta^*$, we avoid, when working with $\beta=-1$, the approximation of $\beta^*$ using simulated data which might causes some statistical issues especially when the number of samples is not large enough \cite{glasserman2004monte}.  

The next theorem exhibits the main result of the present work. In fact, we prove that the estimator $T_{\gamma_{th}}^{'} \left ( \bold{Y}\right )$  has an asymptotically vanishing relative error. Such a result represents a relevant contribution to the field of left-tail simulation of the sum of Log-normal variates since previous works have just achieved the weaker property; namely the asymptotic optimality criterion.

\begin{specialcasecounter}
The estimator $T_{\gamma_{th}}^{'} \left ( \bold{Y}\right )$ has the asymptotically vanishing relative error property with $\beta=-1$, that is
\begin{align}
\limsup_{\gamma_{th} \rightarrow 0} {\frac{\mathrm{var}_g \left [ T_{\gamma_{th}}^{'} \left ( \bold{Y}\right ) \right ]}{\alpha^2(\gamma_{th})}}=0.
\end{align}
\end{specialcasecounter}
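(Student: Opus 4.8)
The plan is to reduce the relative error to the two quantities already controlled by Lemma 1 and Lemma 2. With $\beta=-1$ we have $T_{\gamma_{th}}^{'}\left(\bold{Y}\right)=T_{\gamma_{th}}\left(\bold{Y}\right)-Z_{\gamma_{th}}\left(\bold{Y}\right)+P(\gamma_{th})$, so $\mathrm{var}_g\left[T_{\gamma_{th}}^{'}\left(\bold{Y}\right)\right]=\mathrm{var}_g\left[T_{\gamma_{th}}\left(\bold{Y}\right)\right]-2\,\mathrm{cov}_g\left[T_{\gamma_{th}}\left(\bold{Y}\right),Z_{\gamma_{th}}\left(\bold{Y}\right)\right]+\mathrm{var}_g\left[Z_{\gamma_{th}}\left(\bold{Y}\right)\right]$. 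Substituting the covariance identity $\mathrm{cov}_g\left[T_{\gamma_{th}},Z_{\gamma_{th}}\right]=\mathrm{var}_g\left[T_{\gamma_{th}}\right]+\alpha^2(\gamma_{th})-\alpha(\gamma_{th})P(\gamma_{th})$ obtained above, together with $\mathrm{var}_g\left[T_{\gamma_{th}}\right]=\mathbb{E}_g\left[T_{\gamma_{th}}^2\right]-\alpha^2(\gamma_{th})$ and $\mathrm{var}_g\left[Z_{\gamma_{th}}\right]=\mathbb{E}_g\left[Z_{\gamma_{th}}^2\right]-P^2(\gamma_{th})$, a one-line computation collapses everything to the clean identity $\mathrm{var}_g\left[T_{\gamma_{th}}^{'}\left(\bold{Y}\right)\right]=\left(\mathbb{E}_g\left[Z_{\gamma_{th}}^2\left(\bold{Y}\right)\right]-\mathbb{E}_g\left[T_{\gamma_{th}}^2\left(\bold{Y}\right)\right]\right)-\left(\alpha(\gamma_{th})-P(\gamma_{th})\right)^2$, hence in particular $\mathrm{var}_g\left[T_{\gamma_{th}}^{'}\left(\bold{Y}\right)\right]\le \mathbb{E}_g\left[Z_{\gamma_{th}}^2\left(\bold{Y}\right)\right]-\mathbb{E}_g\left[T_{\gamma_{th}}^2\left(\bold{Y}\right)\right]$.

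Next I would divide by $\alpha^2(\gamma_{th})$ and split the bound as the product of two factors: $\dfrac{\mathrm{var}_g\left[T_{\gamma_{th}}^{'}\left(\bold{Y}\right)\right]}{\alpha^2(\gamma_{th})}\le \dfrac{\mathbb{E}_g\left[Z_{\gamma_{th}}^2\right]-\mathbb{E}_g\left[T_{\gamma_{th}}^2\right]}{\mathbb{E}_g\left[Z_{\gamma_{th}}^2\right]}\cdot\dfrac{\mathbb{E}_g\left[Z_{\gamma_{th}}^2\right]}{\alpha^2(\gamma_{th})}$. The first factor is handled by chaining Lemma 1 and Lemma 2: it is at most $C_1\sqrt{\log\left(1/\gamma_{th}\right)}\sqrt{P_1-P_2}\le C_1\sqrt{C_2}\,\sqrt{\log\left(1/\gamma_{th}\right)}\,\gamma_{th}^{a_{i_0}/2}$, and the decisive point is that $a_{i_0}>0$ by Assumption A, so this factor decays at an algebraic rate in $\gamma_{th}$, up to a logarithmic correction.

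For the second factor I would insert the closed form (\ref{egz2}) for $\mathbb{E}_g\left[Z_{\gamma_{th}}^2\left(\bold{Y}\right)\right]$ together with the identity $\bold{\Lambda^*}^t\bold{\Sigma}^{-1}\bold{\Lambda^*}=(\log(\gamma_{th})-\mu_1)^2/\Sigma_{11}$, apply the Gaussian tail equivalent $\Phi(x)\sim \frac{1}{\sqrt{2\pi}(-x)}e^{-x^2/2}$ to the argument $2(\log(\gamma_{th})-\mu_1)/\sqrt{\Sigma_{11}}$, and compare with the asymptotic equivalent of $\alpha(\gamma_{th})$ from (\ref{dominant}). The exponential factors $\exp\!\big(\mp(\log(\gamma_{th})-\mu_1)^2/\Sigma_{11}\big)$ cancel exactly, and since $\mu_1-\log(\gamma_{th})\sim\log(1/\gamma_{th})$ the remaining ratio is $\Theta\!\left(\log(1/\gamma_{th})\right)$. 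Multiplying the two factors then gives $\mathrm{var}_g\left[T_{\gamma_{th}}^{'}\left(\bold{Y}\right)\right]/\alpha^2(\gamma_{th})=O\!\left((\log(1/\gamma_{th}))^{3/2}\,\gamma_{th}^{a_{i_0}/2}\right)$, which tends to $0$ as $\gamma_{th}\to 0$; as the left-hand side is nonnegative, the $\limsup$ equals $0$, which is exactly the asymptotically vanishing relative error property.

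The truly routine parts are the algebra that collapses $\mathrm{var}_g\left[T_{\gamma_{th}}^{'}\right]$ to the displayed identity and the $\Phi$-asymptotics bookkeeping in the second factor. The only delicate point — and it is essentially already discharged by Lemma 1 and Lemma 2 — is guaranteeing that the gap $\mathbb{E}_g\left[Z_{\gamma_{th}}^2\right]-\mathbb{E}_g\left[T_{\gamma_{th}}^2\right]$ vanishes fast enough to beat the $\log(1/\gamma_{th})$ growth of $\mathbb{E}_g\left[Z_{\gamma_{th}}^2\right]/\alpha^2(\gamma_{th})$; this is precisely where the strict inequality in Assumption A enters, through $a_{i_0}>0$, i.e. the existence of a genuinely dominant component. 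Were $a_{i_0}$ to vanish, the same argument would only recover the weaker asymptotic optimality.
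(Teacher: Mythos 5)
Your proposal is correct and follows essentially the same route as the paper: reduce $\mathrm{var}_g\left[T^{'}_{\gamma_{th}}\right]$ with $\beta=-1$ to $\mathbb{E}_g\left[Z_{\gamma_{th}}^2\right]-\mathbb{E}_g\left[T_{\gamma_{th}}^2\right]$ plus a term that is $o\!\left(\alpha^2(\gamma_{th})\right)$, then control the ratio to $\alpha^2(\gamma_{th})$ by chaining Lemma 1, Lemma 2, and the Gaussian-tail asymptotics of $\mathbb{E}_g\left[Z_{\gamma_{th}}^2\right]/\alpha^2(\gamma_{th})$, arriving at the same $O\!\left(\gamma_{th}^{a_{i_0}/2}(\log(1/\gamma_{th}))^{3/2}\right)$ bound. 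Your rewriting of the residual as $-(\alpha(\gamma_{th})-P(\gamma_{th}))^2\le 0$ is a marginally cleaner way to dispose of the paper's $o(1)$ term, but the argument is otherwise identical.
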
 

\begin{proof}
By replacing $\beta=-1$ into (\ref{variance}), it follows that
\begin{align}
\nonumber &\mathrm{var}_{g} \left [T^{'}_{\gamma_{th}} \left ( \bold{Y}\right ) \right ]=\mathrm{var}_{g} \left [T_{\gamma_{th}} \left ( \bold{Y}\right ) \right ]-2 \mathrm{cov}_{g} \left [T_{\gamma_{th}} \left ( \bold{Y}\right ),Z_{\gamma_{th}} \left (\bold{Y} \right ) \right ]+\mathrm{var}_{g} \left [Z_{\gamma_{th}} \left ( \bold{Y}\right ) \right ]\\
&= \mathbb{E}_{g} \left [ Z_{\gamma_{th}}^2 \left ( \bold{Y}\right )\right ]-\mathbb{E}_{g} \left [ T_{\gamma_{th}}^2 \left ( \bold{Y}\right )\right ]+2 \alpha(\gamma_{th})P(\gamma_{th})-\alpha^2(\gamma_{th})-P^2(\gamma_{th}).
\end{align}
Hence, by dividing the above expression by $\alpha^2(\gamma_{th})$ and via the use of Lemma 1 and Lemma 2, we get
\begin{align}
\nonumber & \frac{\mathrm{var}_{g} \left [T^{'}_{\gamma_{th}} \left ( \bold{Y}\right ) \right ]}{\alpha^2(\gamma_{th})}=\frac{\mathbb{E}_{g} \left [ Z_{\gamma_{th}}^2 \left ( \bold{Y}\right )\right ]-\mathbb{E}_{g} \left [ T_{\gamma_{th}}^2 \left ( \bold{Y}\right )\right ]}{\alpha^2(\gamma_{th})}\\
\nonumber &+\frac{2 \alpha(\gamma_{th})P(\gamma_{th})-\alpha^2(\gamma_{th})-P^2(\gamma_{th})}{\alpha^2(\gamma_{th})}\\
\nonumber & \leq \frac{C_3 \exp \left (\frac{3}{\Sigma_{11}} (\log(\gamma_{th})-\mu_1)^2 \right ) \sqrt{\Phi \left ( \frac{4(\log(\gamma_{th})-\mu_1)}{\sqrt{\Sigma_{11}}}\right )}\gamma_{th}^{a_{i_0}/2}}{\alpha^2(\gamma_{th})}+o(1)\\
& \leq C_4 \gamma_{th}^{a_{i_0}/2} \left (\log(\frac{1}{\gamma_{th}}) \right )^{3/2} +o(1).
\end{align}
Hence, the proof is concluded.
\end{proof}

\section{Simulation Results}
In this section, we provide some selected simulation results to validate some of the results proven in our work and also to investigate the amount of variance reduction achieved by employing the control variate technique compared to the IS scheme of \cite{gulisashvili2016}. The problem parameters are given as follows: the covariance matrix of the $4$-dimensional Gaussian vector is given by
\[\bold{\Sigma}=
\begin{bmatrix}
    1 & 2 & 2 & 2 \\
    2 & 5 & 4 & 4 \\
    2 & 4 & 4.5 & 4 \\
    2 & 4 & 4 & 4.5 \\
\end{bmatrix}
\]
whereas the mean vector is $\mu=(4, 4,  4 ,4)^t$. Note that with the above choice of the covariance matrix, assumption A is satisfied and hence our intuition regarding the choice of the control variable in (\ref{control_var}) is applicable. We define now some performance metrics that serve to compare the proposed estimator with that of \cite{gulisashvili2016}. We define the squared coefficient of variation of the estimator $T^{'}_{\gamma_{th}} \left ( \bold{Y}\right )$ as the ratio of its variance to its squared mean
\begin{align}
CV(T^{'}_{\gamma_{th}} \left ( \bold{Y}\right ))=\frac{\mathrm{var}_g \left [T^{'}_{\gamma_{th}} \left ( \bold{Y}\right ) \right ]}{\alpha^2(\gamma_{th})}.
\end{align} 
Similarly, we define the squared coefficient of variation of the estimator $T_{\gamma_{th}} \left ( \bold{Y}\right )$ as 
\begin{align}
CV(T_{\gamma_{th}} \left ( \bold{Y}\right ))=\frac{\mathrm{var}_g \left [T_{\gamma_{th}} \left ( \bold{Y}\right ) \right ]}{\alpha^2(\gamma_{th})}.
\end{align}
The squared coefficient of variation, also referred as squared relative error in some references, is an interesting performance metric that serves to indicate the number of samples needed to achieve a fixed accuracy requirement. In fact, from the central limit theorem, the number of samples should be  proportional to the squared coefficient of variation in order to maintain the width of the confidence interval constant.

We define also the amount of variance reduction between the mean-shifting IS approach of \cite{gulisashvili2016} and our proposed approach as
\begin{align}
\xi= \frac{\mathrm{var}_g \left [ T_{\gamma_{th}} \left (\bold{Y} \right )\right ]}{\mathrm{var}_g \left [T^{'}_{\gamma_{th}} \left ( \bold{Y}\right ) \right ]}.
\end{align}
\begin{figure}[h]
\centering
\setlength\figureheight{0.40\textwidth}
\setlength\figurewidth{0.58\textwidth}
% This file was created by matlab2tikz v0.3.0.
% Copyright (c) 2008--2012, Nico Schlömer <nico.schloemer@gmail.com>
% All rights reserved.
% 
% The latest updates can be retrieved from
%   http://www.mathworks.com/matlabcentral/fileexchange/22022-matlab2tikz
% where you can also make suggestions and rate matlab2tikz.
% 
% 
% 

% defining custom colors
\definecolor{mycolor1}{rgb}{0,0.447,0.741}
\scalefont{0.7}
\begin{tikzpicture}

\begin{axis}[%
width=\figurewidth,
height=\figureheight,
scale only axis,
every outer x axis line/.append style={darkgray!60!black},
every x tick label/.append style={font=\color{darkgray!60!black}},
xmin=-20, xmax=0,
xlabel={$\gamma{}_{\text{th}}\text{(dB)}$},
xmajorgrids,
every outer y axis line/.append style={darkgray!60!black},
every y tick label/.append style={font=\color{darkgray!60!black}},
ymin=0.7, ymax=1,
ylabel={Correlation Coefficient},
ymajorgrids,
grid style={solid}]
\addplot [
color=mycolor1,
solid,
line width=1.5pt,
mark size=2.0pt,
mark=square,
mark options={solid},
forget plot
]
coordinates{
 (-20,0.992726387761912)(-19,0.99053085733034)(-18,0.988887939904133)(-17,0.986087390790376)(-16,0.983139027199631)(-15,0.979614782142526)(-14,0.975593512376742)(-13,0.970503016813797)(-12,0.964628229500836)(-11,0.95660918823804)(-10,0.947779449261021)(-9,0.934838170665895)(-8,0.922392068744623)(-7,0.908644010475321)(-6,0.889959143580581)(-5,0.871525502275139)(-4,0.846793278151701)(-3,0.821093041640015)(-2,0.79028439751072)(-1,0.75782950722228)(0,0.721854929290764) 
};
\end{axis}
\end{tikzpicture}%
\caption{Correlation Coefficient $\rho_{T_{\gamma_{th}} \left ( \bold{Y}\right ),Z_{\gamma_{th}} \left ( \bold{Y}\right )}$ as a function of $\gamma_{th}$.}
\label{fig1}
\end{figure}
In a first experiment, we aim to validate that the correlation between $T_{\gamma_{th}} \left ( \bold{Y}\right )$ and $Z_{\gamma_{th}} \left ( \bold{Y}\right )$ approaches $1$ as $\gamma_{th}$ goes to $0$. To this end, we plot in Fig. \ref{fig1} the correlation coefficient $\rho_{T_{\gamma_{th}} \left ( \bold{Y}\right ),Z_{\gamma_{th}} \left ( \bold{Y}\right )}$ as a function of $\gamma_{th}$. 
This figure clearly shows that the correlation coefficient $\rho_{T_{\gamma_{th}} \left ( \bold{Y}\right ),Z_{\gamma_{th}} \left ( \bold{Y}\right )}$ approaches $1$ as the event of interest becomes rare and rare, i.e. as we decrease the threshold value $\gamma_{th}$. Such a result supports the performance of the proposed control variate technique in ensuring a variance reduction with respect to the mean shifting approach of \cite{gulisashvili2016} and interestingly this reduction is increasing as we decrease the value of $\gamma_{th}$. 

In a second experiment, we want to investigate the value of $\beta^*$ as a function of the $\gamma_{th}$. We plot in Fig. \ref{fig2} the approximate value of $\beta^*$ as a function of $\gamma_{th}$. From this figure, we easily observe that $\beta^*$ approaches $-1$ as $\gamma_{th}$ decreases which validate the result in Proposition 1. Thus, instead of working with the optimal unknown value $\beta^*$, this result suggests to work with a fixed value of $\beta$ equal to $-1$. 
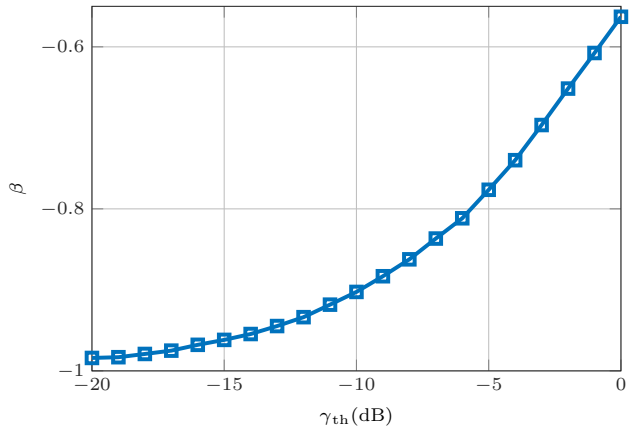
\begin{figure}[h]
\centering
\setlength\figureheight{0.40\textwidth}
\setlength\figurewidth{0.58\textwidth}
% This file was created by matlab2tikz v0.3.0.
% Copyright (c) 2008--2012, Nico Schlömer <nico.schloemer@gmail.com>
% All rights reserved.
% 
% The latest updates can be retrieved from
%   http://www.mathworks.com/matlabcentral/fileexchange/22022-matlab2tikz
% where you can also make suggestions and rate matlab2tikz.
% 
% 
% 

% defining custom colors
\definecolor{mycolor1}{rgb}{0,0.447,0.741}
\scalefont{0.7}
\begin{tikzpicture}

\begin{axis}[%
width=\figurewidth,
height=\figureheight,
scale only axis,
every outer x axis line/.append style={darkgray!60!black},
every x tick label/.append style={font=\color{darkgray!60!black}},
xmin=-20, xmax=0,
xlabel={$\gamma{}_{\text{th}}\text{(dB)}$},
xmajorgrids,
every outer y axis line/.append style={darkgray!60!black},
every y tick label/.append style={font=\color{darkgray!60!black}},
ymin=-1, ymax=-0.55,
ylabel={$\beta$},
ymajorgrids,
grid style={solid}]
\addplot [
color=mycolor1,
solid,
line width=1.5pt,
mark size=2.0pt,
mark=square,
mark options={solid},
forget plot
]
coordinates{
 (-20,-0.984040006611881)(-19,-0.983119270969396)(-18,-0.979116486170582)(-17,-0.975007011989505)(-16,-0.967831290800369)(-15,-0.961736996940846)(-14,-0.954503856524289)(-13,-0.944661825161449)(-12,-0.933698937095877)(-11,-0.918255758738925)(-10,-0.902533886498487)(-9,-0.883177397036299)(-8,-0.862305326254788)(-7,-0.83664775505529)(-6,-0.811629747129908)(-5,-0.776437165165523)(-4,-0.739917287766101)(-3,-0.696522796641628)(-2,-0.65152189345814)(-1,-0.607587935913239)(0,-0.562878020225772) 
};
\end{axis}
\end{tikzpicture}%
\caption{Approximate optimal value $\beta^*$ as a function of $\gamma_{th}$.}
\label{fig2}
\end{figure}

In Fig. \ref{fig3}, we plot the estimated value of $\alpha(\gamma_{th})$ as a function of $\gamma_{th}$ using the mean-shifting IS estimator of \cite{gulisashvili2016} as well as our proposed estimator with $\beta=\beta^*$ and $\beta=-1$.
\begin{figure}[h]
\centering
\setlength\figureheight{0.40\textwidth}
\setlength\figurewidth{0.58\textwidth}
% This file was created by matlab2tikz v0.3.0.
% Copyright (c) 2008--2012, Nico Schlömer <nico.schloemer@gmail.com>
% All rights reserved.
% 
% The latest updates can be retrieved from
%   http://www.mathworks.com/matlabcentral/fileexchange/22022-matlab2tikz
% where you can also make suggestions and rate matlab2tikz.
% 
% 
% 
\begin{tikzpicture}
\scalefont{0.7}
\begin{semilogyaxis}[%
width=\figurewidth,
height=\figureheight,
scale only axis,
every outer x axis line/.append style={darkgray!60!black},
every x tick label/.append style={font=\color{darkgray!60!black}},
xmin=-10, xmax=5,
xlabel={$\gamma{}_{\text{th}}\text{(dB)}$},
xmajorgrids,
every outer y axis line/.append style={darkgray!60!black},
every y tick label/.append style={font=\color{darkgray!60!black}},
ymin=1e-10, ymax=0.01,
yminorticks=true,
ylabel={$\alpha\text{(}\gamma{}_{\text{th}}\text{)}$},
ymajorgrids,
yminorgrids,
grid style={solid},
legend style={at={(0.012177658917024,0.755091779025583)},anchor=south west,draw=darkgray!60!black,fill=white,align=left}]
\addplot [
color=blue,
solid,
line width=1.5pt,
mark size=2.0pt,
mark=o,
mark options={solid},
%forget plot
]
coordinates{
 (-10,1.39078438214206e-10)(-9,5.92831717898207e-10)(-8,2.39891311234351e-09)(-7,9.19339754379149e-09)(-6,3.3178416833905e-08)(-5,1.14246124767071e-07)(-4,3.72563797672772e-07)(-3,1.14711639583681e-06)(-2,3.33728503335302e-06)(-1,9.20105925303787e-06)(0,2.39206315755167e-05)(1,5.92854253468175e-05)(2,0.000138352368010141)(3,0.000305968287807699)(4,0.000640439537511128)(5,0.00127384016515106) 
};
\addlegendentry{$\hat \alpha_{IS}(\gamma_{th})$}
\addplot [
color=red,
solid,
line width=1.5pt,
mark size=2.0pt,
mark=square,
mark options={solid},
%forget plot
]
coordinates{
 (-10,1.3943763615063e-10)(-9,5.9397935938202e-10)(-8,2.40157246500639e-09)(-7,9.19654732849352e-09)(-6,3.32343639862012e-08)(-5,1.14466097974979e-07)(-4,3.72587590610236e-07)(-3,1.14320630342645e-06)(-2,3.33170712785236e-06)(-1,9.19293921944186e-06)(0,2.39404460647169e-05)(1,5.90439396419166e-05)(2,0.000138006465903744)(3,0.000305281856040731)(4,0.000642231060106493)(5,0.00127632862727998) 
};
\addlegendentry{$\hat \alpha_{IS-CV}(\gamma_{th})$ with $\beta=\beta^*$}
\addplot [
color=black,
solid,
line width=1.5pt,
mark size=2.0pt,
mark=diamond,
mark options={solid},
%forget plot
]
coordinates{
 (-10,1.39349423975925e-10)(-9,5.945482402177e-10)(-8,2.40344092357806e-09)(-7,9.18067456943907e-09)(-6,3.33069486712903e-08)(-5,1.14605435987255e-07)(-4,3.72522430077446e-07)(-3,1.1462761488257e-06)(-2,3.33532952539136e-06)(-1,9.18890140224322e-06)(0,2.39853017319104e-05)(1,5.91346461279796e-05)(2,0.000138207174788755)(3,0.000305388535045105)(4,0.000641376634184789)(5,0.00127249825726123) 
};
\addlegendentry{$\hat \alpha_{IS-CV}(\gamma_{th})$ with $\beta=-1$}
\end{semilogyaxis}
\end{tikzpicture}%
\caption{$\alpha(\gamma_{th})$ as a function of $\gamma_{th}$ with $M=10^6$.}
\label{fig3}
\end{figure}
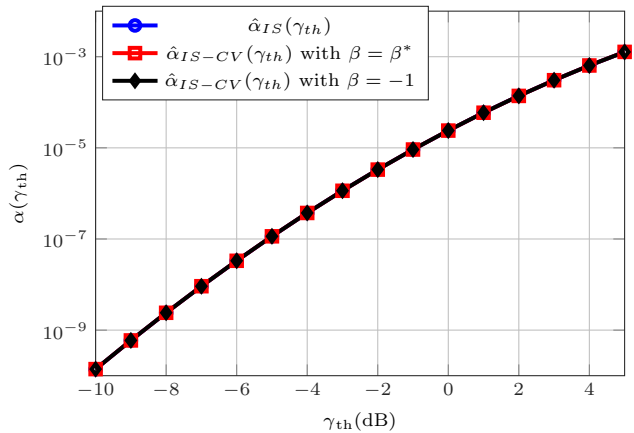
Note that we employ the version of \cite{glasserman2004monte} where $\beta^*$ is estimated using the same simulated data as for the estimation. While this could result in a dependence across the replicants, it was noted in \cite{glasserman2004monte} that when $M$ is sufficiently large, the dependence could be ignored and we retrieve the same performance when using the approximate value of $\beta^*$ instead of the optimal unknown value. From Fig. \ref{fig3}, we observe that the three curves coincide perfectly and thus the three approaches yield accurate estimates of $\alpha(\gamma_{th})$. 
\begin{figure}[h]
\centering
\setlength\figureheight{0.40\textwidth}
\setlength\figurewidth{0.58\textwidth}
% This file was created by matlab2tikz v0.3.0.
% Copyright (c) 2008--2012, Nico Schlömer <nico.schloemer@gmail.com>
% All rights reserved.
% 
% The latest updates can be retrieved from
%   http://www.mathworks.com/matlabcentral/fileexchange/22022-matlab2tikz
% where you can also make suggestions and rate matlab2tikz.
% 
% 
% 
\begin{tikzpicture}
\scalefont{0.7}
\begin{axis}[%
width=\figurewidth,
height=\figureheight,
scale only axis,
every outer x axis line/.append style={darkgray!60!black},
every x tick label/.append style={font=\color{darkgray!60!black}},
xmin=-10, xmax=5,
xlabel={$\gamma{}_{\text{th}}\text{(dB)}$},
xmajorgrids,
every outer y axis line/.append style={darkgray!60!black},
every y tick label/.append style={font=\color{darkgray!60!black}},
ymin=0, ymax=8,
ylabel={Squared Coefficient of Variation},
ymajorgrids,
grid style={solid},
legend style={at={(0.322176710154079,0.737699596786146)},anchor=south west,draw=darkgray!60!black,fill=white,align=left}]
\addplot [
color=blue,
solid,
line width=1.5pt,
mark size=2.0pt,
mark=o,
mark options={solid},
%forget plot
]
coordinates{
 (-10,7.22680000251191)(-9,6.95850425697925)(-8,6.7077668812122)(-7,6.43624393716178)(-6,6.19114274535568)(-5,5.92701529809103)(-4,5.65792949058653)(-3,5.46100969227461)(-2,5.19616604667289)(-1,4.95391142526849)(0,4.736719828414)(1,4.54720839905784)(2,4.32826302657732)(3,4.12906891501519)(4,3.90861065234556)(5,3.75109386571306) 
};
\addlegendentry{$CV(T_{\gamma_{th}} \left ( \bold{Y}\right ))$}
\addplot [
color=red,
solid,
line width=1.5pt,
mark size=2.0pt,
mark=square,
mark options={solid},
%forget plot
]
coordinates{
 (-10,0.748421189613002)(-9,0.874307846788636)(-8,0.978858321833015)(-7,1.1210900547933)(-6,1.30783469944288)(-5,1.43025776002393)(-4,1.58754242024806)(-3,1.77850756154308)(-2,1.96136654124159)(-1,2.10639336645496)(0,2.28257812994477)(1,2.42582390495309)(2,2.54768711003214)(3,2.64073016984112)(4,2.71019954246967)(5,2.78222256881984) 
};
\addlegendentry{$CV(T^{'}_{\gamma_{th}} \left ( \bold{Y}\right ))$ with $\beta=\beta^*$}
\addplot [
color=black,
solid,
line width=1.5pt,
mark size=2.0pt,
mark=diamond,
mark options={solid},
%forget plot
]
coordinates{
 (-10,0.8251392609235)(-9,0.958778477284653)(-8,1.1164455440655)(-7,1.34316989592952)(-6,1.571975608646)(-5,1.7922920683582)(-4,2.09251381696453)(-3,2.44756884949177)(-2,2.84774832332508)(-1,3.28562709101247)(0,3.78674673705694)(1,4.36935461489926)(2,4.99120979786105)(3,5.70091618484856)(4,6.3868362629318)(5,7.23431731295649) 
};
\addlegendentry{$CV(T^{'}_{\gamma_{th}} \left ( \bold{Y}\right ))$ with $\beta=-1$}
\end{axis}
\end{tikzpicture}%
\caption{Squared coefficient of variation  as a function of $\gamma_{th}(dB)$.}
\label{fig4}
\end{figure}
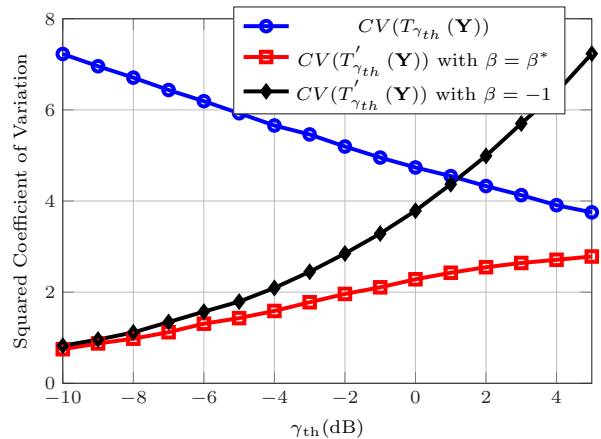

Now, we focus on studying the efficiency of the three approaches. To this end, we plot in Fig. \ref{fig4} the squared coefficient of variation given by the mean-shifting approach and the two versions of the control variate techniques; namely when $\beta$ is equal to the approximate value of $\beta^*$ and when $\beta$ is equal to $-1$. We point out from this figure that the squared coefficient of variation corresponding to the mean-shifting IS scheme is increasing as we decrease the threshold values. This is expected from our analysis since the mean-shifting approach is only asymptotically optimal. More precisely, from (\ref{dominant}) and (\ref{sec_mom_z}), the quantity $CV \left ( T_{\gamma_{th}} \left ( \bold{Y}\right )\right )$ is equivalent to a constant times $\log \left (\frac{1}{\gamma_{th}} \right )$. 
Hence, in order to meet a fixed accuracy requirement, i.e. maintain the width of the confidence interval fixed, the number of simulation runs required by the mean-shifting approach should be of the order of $\log \left (\frac{1}{\gamma_{th}} \right )$.

On the other hand, the squared coefficient of variation of the control variate estimator, using the two values of $\beta$, is decreasing as we decrease the threshold values. Such an observation is expected from the fact that our proposed estimator $T^{'}_{\gamma_{th}} \left (\bold{Y} \right )$ with $\beta=-1$ has the asymptotically vanishing relative error property as it was proven in Theorem 1. 
\begin{figure}[h]
\centering
\setlength\figureheight{0.40\textwidth}
\setlength\figurewidth{0.58\textwidth}
% This file was created by matlab2tikz v0.3.0.
% Copyright (c) 2008--2012, Nico Schlömer <nico.schloemer@gmail.com>
% All rights reserved.
% 
% The latest updates can be retrieved from
%   http://www.mathworks.com/matlabcentral/fileexchange/22022-matlab2tikz
% where you can also make suggestions and rate matlab2tikz.
% 
% 
% 
\begin{tikzpicture}
\scalefont{0.7}
\begin{axis}[%
width=\figurewidth,
height=\figureheight,
scale only axis,
every outer x axis line/.append style={darkgray!60!black},
every x tick label/.append style={font=\color{darkgray!60!black}},
xmin=-10, xmax=5,
xlabel={$\gamma{}_{\text{th}}\text{(dB)}$},
xmajorgrids,
every outer y axis line/.append style={darkgray!60!black},
every y tick label/.append style={font=\color{darkgray!60!black}},
ymin=0, ymax=10,
ylabel={Variance Reduction},
ymajorgrids,
grid style={solid},
legend style={draw=darkgray!60!black,fill=white,align=left}]
\addplot [
color=red,
solid,
line width=1.5pt,
mark size=2.0pt,
mark=square,
mark options={solid},
%forget plot
]
coordinates{
 (-10,9.65606012070393)(-9,7.9588720180633)(-8,6.85264325959983)(-7,5.74105881114828)(-6,4.73388781318695)(-5,4.14401897598645)(-4,3.5639548388902)(-3,3.07055747771829)(-2,2.64925802363469)(-1,2.35184534102757)(0,2.07516218887483)(1,1.87450061390411)(2,1.69889897763887)(3,1.56360879357228)(4,1.44218556275891)(5,1.34823644511812) 
};
\addlegendentry{$\xi$ with $\beta=\beta^*$}
\addplot [
color=black,
solid,
line width=1.5pt,
mark size=2.0pt,
mark=diamond,
mark options={solid},
%forget plot
]
coordinates{
 (-10,8.7582791714742)(-9,7.25767674373163)(-8,6.00814515035466)(-7,4.79183158933717)(-6,3.93844707977902)(-5,3.30694723406346)(-4,2.70389110204019)(-3,2.23119757934841)(-2,1.82465774946213)(-1,1.50775218490846)(0,1.25086787084562)(1,1.04070481795003)(2,0.86717713778174)(3,0.724281638447712)(4,0.611979153909193)(5,0.518513869856792) 
};
\addlegendentry{$\xi$ with $\beta=-1$}
\end{axis}
\end{tikzpicture}%
\caption{Amount of variance reduction as a function of $\gamma_{th}(dB)$.}
\label{fig5}
\end{figure}
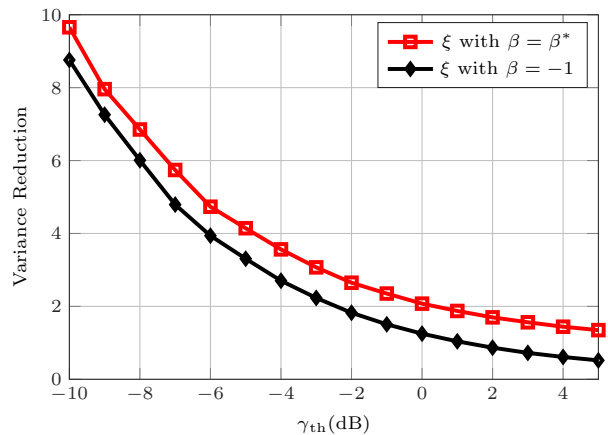
Hence, the number of samples needed by the proposed control variate technique with both values of $\beta$ is getting smaller in order to meet a certain accuracy measured by the width of the confidence interval. Moreover, we point out that asymptotically $CV(T^{'}_{\gamma_{th}} \left ( \bold{Y}\right ))$ with $\beta=-1$ approaches the optimal value of $CV(T^{'}_{\gamma_{th}} \left ( \bold{Y}\right ))$ which in accordance with the result proven in  Proposition 1.

Finally, we aim to quantify the amount of variance reduction $\xi$ achieved by both versions of the proposed control variate estimators compared to the mean-shifting one. We plot then in Fig. \ref{fig5} the value of $\xi$ as a function of $\gamma_{th}$. Interestingly, we observe that, for both versions of the control variate approaches, the amount of variance reduction is increasing as decrease the probability of interest. Moreover, the value of $\xi$ with $\beta=-1$ approaches, as we decrease $\gamma_{th}$, the optimal value corresponding to $\beta=\beta^*$ which is $\frac{1}{1-\rho_{T_{\gamma_{th}} \left ( \bold{Y}\right ),Z_{\gamma_{th}} \left ( \bold{Y}\right )}^2}$.

\section{Conclusion}
In this paper, we considered the problem of estimating the probability that a sum of correlated Log-normal variates with Gaussian copula falls below a certain threshold. We developed a variance reduction technique based on a combination of a control variate approach with a previously developed mean-shifting importance sampling approach. Under a mild assumption on the covariance matrix of the corresponding multivariate Gaussian random vector, we proved that the proposed estimator has the asymptotically vanishing relative error property. This result is a major contribution in the field of left-tail estimation of sum of Log-normal random variables since previous estimators were only  proven to satisfy weaker properties. We showed some selected simulations in order to validate our theoretical results and to quantify the performances of the proposed estimator with and without the employment of the control variate technique. 
\bibliography{References_arxiv}

\begin{thebibliography}{10}

\bibitem{DBLP:journals/anor/AsmussenBJR11}
S.~Asmussen, J.~H. Blanchet, S.~Juneja, and L.~Rojas{-}Nandayapa.
\newblock Efficient simulation of tail probabilities of sums of correlated
  {L}ognormals.
\newblock {\em Annals {OR}}, 189(1):5--23, 2011.

\bibitem{opac-b1123521}
S.~Asmussen and P.~W. Glynn.
\newblock {\em Stochastic simulation : algorithms and analysis}.
\newblock Stochastic modelling and applied probability. Springer, New York,
  2007.

\bibitem{Laplace}
S.~Asmussen, J.~L. Jensen, and L.~Rojas-Nandayapa.
\newblock On the {L}aplace transform of the {L}ognormal distribution.
\newblock {\em Methodology and Computing in Applied Probability}, pages 1--18,
  2014.

\bibitem{asmussen2016exponential}
S.~Asmussen, J.~L. Jensen, and L.~Rojas-Nandayapa.
\newblock Exponential family techniques for the lognormal left tail.
\newblock {\em Scandinavian Journal of Statistics}, 43(3):774--787, Sep. 2016.

\bibitem{reference_1}
S.~Asmussen and D.~P. Kroese.
\newblock Improved algorithms for rare event simulation with heavy tails.
\newblock {\em Advances in Applied Probability}, pages 545--558, 2006.

\bibitem{RePEc:eee:stapro:v:78:y:2008:i:16:p:2709-2714}
S.~Asmussen and L.~Rojas-Nandayapa.
\newblock Asymptotics of sums of lognormal random variables with gaussian
  copula.
\newblock {\em Statistics \& Probability Letters}, 78(16):2709--2714, 2008.

\bibitem{1275712}
N.~C. Beaulieu and Q.~Xie.
\newblock An optimal {L}ognormal approximation to {L}ognormal sum
  distributions.
\newblock {\em IEEE Transactions on Vehicular Technology}, 53(2):479--489,
  {M}ar. 2004.

\bibitem{BenRached2016}
N.~Ben~Rached, F.~Benkhelifa, A.~Kammoun, M.-S. Alouini, and R.~Tempone.
\newblock On the generalization of the hazard rate twisting-based simulation
  approach.
\newblock {\em Statistics and Computing}, pages 1--15, 2016.

\bibitem{7328688}
N.~Ben~Rached, A.~Kammoun, M.-S. Alouini, and R.~Tempone.
\newblock Unified importance sampling schemes for efficient simulation of
  outage capacity over generalized fading channels.
\newblock {\em IEEE Journal of Selected Topics in Signal Processing},
  10(2):376--388, Mar. 2016.

\bibitem{DBLP:conf/wsc/BlanchetJR08}
J.~H. Blanchet, S.~Juneja, and L.~Rojas{-}Nandayapa.
\newblock Efficient tail estimation for sums of correlated lognormals.
\newblock In {\em Proceedings of the 2008 Winter Simulation Conference, Miami,
  Florida, USA, December 7-10, 2008}, pages 607--614, 2008.

\bibitem{4784348}
M.~Di~Renzo, F.~Graziosi, and F.~Santucci.
\newblock Approximating the linear combination of {L}og-normal {RV}s via
  {P}earson type {IV} distribution for {UWB} performance analysis.
\newblock {\em IEEE Transactions on Communications}, 57(2):388--403, {F}eb.
  2009.

\bibitem{4814351}
M.~Di~Renzo, F.~Graziosi, and F.~Santucci.
\newblock Further results on the approximation of {L}og-normal power sum via
  {P}earson type {IV} distribution: {A} general formula for log-moments
  computation.
\newblock {\em IEEE Transactions on Communications}, 57(4):893--898, {A}pr.
  2009.

\bibitem{CIS-230627}
D.~Dufresne.
\newblock The log-normal approximation in financial and other computations.
\newblock {\em Advances in Applied Probability}, 36(3):747--773, 2004.

\bibitem{citeulike:6297231}
L.~Fenton.
\newblock The sum of {L}og-normal probability distributions in scatter
  transmission systems.
\newblock {\em IRE Transactions on Communications Systems}, 8(1):57--67, 1960.

\bibitem{Ghavami}
M.~Ghavami, R.~Kohno, and L.~Michael.
\newblock {\em Ultra Wideband Signals and Systems in Communication
  Engineering}.
\newblock Wiley, Chichester, 2004.

\bibitem{glasserman2004monte}
P.~Glasserman.
\newblock {\em Monte Carlo Methods in Financial Engineering}.
\newblock Springer, New York, 2004.

\bibitem{gulisashvili2016}
A.~Gulisashvili and P.~Tankov.
\newblock Tail behavior of sums and differences of log-normal random variables.
\newblock {\em Bernoulli}, 22(1):444--493, Feb. 2016.

\bibitem{Juneja:2002:SHT:566392.566394}
S.~Juneja and P.~Shahabuddin.
\newblock Simulating heavy tailed processes using delayed hazard rate twisting.
\newblock {\em ACM Trans. Model. Comput. Simul.}, 12(2):94--118, {A}pr. 2002.

\bibitem{opac-b1132466}
D.~P. Kroese, T.~Taimre, and Z.~I. Botev.
\newblock {\em Handbook of Monte Carlo methods}.
\newblock Wiley, N.J, 2011.

\bibitem{journals/twc/NavidpourUK07}
S.~M. Navidpour, M.~Uysal, and M.~Kavehrad.
\newblock {BER} performance of free-space optical transmission with spatial
  diversity.
\newblock {\em IEEE Transactions on Wireless Communications}, 6(8):2813--2819,
  {A}ug. 2007.

\bibitem{CIS-358019}
L.~Rojas-Nandayapa and J.~H. Blanchet.
\newblock Efficient simulation of tail probabilities of sums of dependent
  random variables.
\newblock {\em Journal of Applied Probability}, 2011:--, 2011.

\bibitem{179349}
J.S. Sadowsky.
\newblock On the optimality and stability of exponential twisting in {M}onte
  {C}arlo estimation.
\newblock {\em IEEE Transactions on Information Theory}, 39(1):119--128, Jan.
  1993.

\bibitem{citeulike:7151841}
S.~C. Schwartz and Y.~S. Yeh.
\newblock {On the distribution function and moments of power sums with
  {L}ognormal component}.
\newblock {\em The Bell Systems Technical Journal}, 1982.

\bibitem{Stuber:2001:PMC:368633}
G.~L. St\"{u}ber.
\newblock {\em Principles of Mobile Communication, 2nd Edition.}
\newblock Kluwer Academic Publishers, Norwell, MA, USA, 2001.

\end{thebibliography}
\bibliographystyle{plain}
\end{document}